\newtheorem{theo}{Theorem}
\newenvironment{theorem}{\vspace{4mm}\begin{theo}}{\end{theo}}
\newtheorem{lem}[theo]{Lemma}
\newenvironment{lemma}{\vspace{4mm}\begin{lem}}{\end{lem}}
\newtheorem{coro}[theo]{Corollary}
\newenvironment{corollary}{\vspace{4mm}\begin{coro}}{\end{coro}}
\newtheorem{rem}[theo]{Remark}
\newenvironment{remark}{\vspace{4mm}\begin{rem}\rm}{\end{rem}}
\newtheorem{prop}[theo]{Proposition}
\newenvironment{proposition}{\vspace{4mm}\begin{prop}}{\end{prop}}
\newtheorem{defi}{Definition}
\newtheoremstyle{citing}{}{}{\itshape}{}{\bfseries}{.}{ }{\thmnote{#3}}
\theoremstyle{citing}
\newtheorem{cit}{}
\newcommand{\Z}{\mathbb{Z}}
\newcommand{\N}{\mathbb{N}}
\newcommand{\R}{\mathbb{R}}
\newcommand{\ind}{\scalebox{1.3}{\raisebox{-1.5pt}{$\mathds{1}$}}}
\newcommand{\abs}[1]{\left\vert #1 \right\vert}	
\newcommand{\norm}[1]{\left\Vert #1 \right\Vert}	
\newcommand{\Var}{\mbox{\rm Var}}
\newcommand{\EE}{\mathcal{E}}
\renewcommand{\l}{\langle}
\renewcommand{\r}{\rangle}
\newcommand{\eps}{\varepsilon}
\renewcommand{\O}{\Omega}
\begin{document}

\title{Comparison of Swendsen-Wang and Heat-Bath Dynamics}
\author{Mario Ullrich\footnote{The author was supported 
																by the DFG GK 1523.}\\
	Mathematisches Institut, Universit\"at Jena\\   
	Ernst-Abbe-Platz 2, 07743 Jena, Germany\\   
	email: mario.ullrich@uni-jena.de}
	
\date{March 15, 2012}

\maketitle

\begin{abstract}
We prove that the spectral gap of the Swendsen-Wang process for 
the Potts model on graphs with bounded degree 
is bounded from below by some constant times the spectral gap 
of any single-spin dynamics.  
This implies rapid mixing for the two-dimensional Potts model 
at all temperatures above the critical one, as well as rapid mixing 
at the critical temperature for the Ising model. 
After this we introduce a modified version of the 
Swendsen-Wang algorithm for planar graphs and prove rapid mixing 
for the two-dimensional Potts models at all non-critical temperatures.
\end{abstract}

\thispagestyle{empty}

\section{Introduction}

We study the mixing properties (in terms of the spectral gap) of the 
Swendsen-Wang dynamics for the $q$-state Potts model on graphs 
with bounded degree. 
The Swendsen-Wang process is probably the most widely used algorithm 
to generate (approximate) samples from the Potts model because,
empirically, it seems to be rapid for all graphs and temperatures, 
at least for small $q$. But this is in general not true. 
It is known that the process does not mix rapidly at some critical 
temperature on the complete graph for all $q\ge3$, see Gore and 
Jerrum \cite{GJ}, 
and on rectangular subsets of the hypercubic lattice $\Z^d$, 
$d\ge2$, for $q$ sufficiently large, see the recent paper of 
Borgs, Chayes and Tetali \cite{BCT}. 
Furthermore, Li and Sokal \cite{LiS} proved that there is a lower 
bound on the mixing time in terms of the \emph{specific heat}.
Results on rapid mixing of the Swendsen-Wang dynamics are also rare. 
Rapid mixing is proven at all temperatures on trees and cycles for 
any $q$, see Cooper and Frieze \cite{CF}, on narrow grids for any 
$q$, see Cooper et al. \cite{CDFR}, 
and on the complete graph for $q=2$, see \cite{CDFR} and Long et al. 
\cite{LNP}. 
For rapid mixing results for sufficiently high or low temperatures 
see e.g. Martinelli \cite{M2}, Cooper and Frieze \cite{CF} and 
Huber \cite{Hu}.

In this article we prove that the spectral gap of the 
Swendsen-Wang dynamics on graphs with bounded degree is bounded 
from below 
by some constant times the spectral gap of the (single-site) 
heat-bath dynamics. The main result of this paper can be stated as 
follows, see Theorem~\ref{theorem:main}.

Suppose that $P$ (resp. $P_{\rm HB}$) is the transition matrix of 
the Swendsen-Wang (resp. heat-bath) dynamics, which is reversible 
with respect to the Boltzmann distribution $\pi^{G}_{\beta,q}$ of 
the $q$-state Potts model on the graph $G$ at inverse 
temperature $\beta$.
Then there exists a constant $c>0$, depending only on $q$ and 
$\beta$, such that
\[
\lambda(P) \;\ge\; c^\Delta\,\lambda(P_{\rm HB}),
\]
where $\lambda(\cdot)$ denotes the spectral gap and $\Delta$ is 
the maximal degree of the underlying graph $G$. 
In Theorem~\ref{theorem:main2}$'$ (Subsection 4.3.2) we prove this 
statement also for the case that we have only a single vertex $v^*$ 
of large degree, where $\Delta$ is replaced by the second largest 
degree and $P_{\rm HB}$ is replaced by the transition matrix of 
the heat bath chain that leaves the color at $v^*$ constant.

In particular, this implies (see Corollary~\ref{coro:2d}) that 
the Swendsen-Wang process is rapidly 
mixing above the critical temperature for the two-dimensional 
Potts model ($q\ge2$) and at the critical temperature for $q=2$.
Furthermore we get rapid mixing for graphs with maximum 
degree $\Delta$ if $p=1-e^{-\beta}<1/\Delta$. If the graph is 
planar, then $p=1-e^{-\beta}<1/(3\sqrt{(\Delta-3)})$ is 
sufficient for $\Delta\ge6$ (see Corollary~\ref{coro:planar}). 
This improves over the $1/(3(\Delta-1))$ bound of \cite{Hu}.

One may believe (and numerical results suggest) that there 
could be a sharper bound on the spectral gap 
of the Swendsen-Wang with an additional factor $N$ 
(= number of vertices of the graph) on the right hand side, 
i.e. $\lambda(P)=\Omega(N \lambda(P_{\rm HB}))$. But proving 
this does not seem to be possible with the techniques of this 
paper.

We conjecture that the spectral gap of 
Swendsen-Wang is (almost) the 
same at high and low temperatures, but we were not able to 
prove this. Nevertheless we present a modified version of the 
Swendsen-Wang algorithm for planar graphs that makes an additional 
step on the dual lattice. We prove that this process is rapidly 
mixing for the two-dimensional Ising model at all temperatures 
and for the two-dimensional Potts model for all non-critical 
temperatures (see Corollary~\ref{coro:mod}).

\section{Preliminaries} \label{sec:prelim}

In this section we introduce the models and notations 
to analyze the Swendsen-Wang algorithm. 
First we define the Potts (Ising) and the random-cluster model and 
explain their connection.

\subsection{The models}

Fix a natural number $q\ge2$ and a graph $G=(V,E)$ 
with finite vertex set 
$V$ and edge set $E\subseteq\binom{V}{2}$, 
where $\binom{V}{2}$ is the set of all subsets of $V$ with 2 
elements. Let $N:=\abs{V}$. 
As an example one can have in mind the two-dimensional square lattice 
$G_L$ of side length $L$, i.e. $V=\{1,\dots,L\}^2\subset\Z^2$ and 
$E=\{\{u,v\}\in V^2:\,\abs{u-v}=1\}$, where $\abs{\,\cdot\,}$ denotes 
the $\ell_1$ norm.

If we consider more than one graph, 
we denote by $V_G$ (resp. $E_G$) the vertex (resp. edge) set of the 
graph $G$. Furthermore we write $u\leftrightarrow v$ if 
$u$ and $v$ are \emph{neighbors} in $G$, i.e. $\{u,v\}\in E$, 
and by $\deg_G(v)$ we denote the \emph{degree} of the vertex $v$ 
in $G$.
The \emph{$q$-state Potts model} on $G$ is defined as the set of 
possible \emph{configurations} 
$\O_{\rm P}=[q]^V$, where $[q]\,{:=}\,\{1,\dots,q\}$ 
is the set of \emph{colors} (or spins),
together with the probability measure
\vspace{1mm}
\[
\pi_\beta(\sigma) \;:=\; \pi^{G}_{\beta,q}(\sigma) \;=\; 
	\frac1{Z(G,\beta,q)}\,
	\exp\left\{\beta\,\sum_{u,v:\, u\leftrightarrow v}
	\ind\bigl(\sigma(u)=\sigma(v)\bigr)\right\}
\]
for $\sigma\in\O_{\rm P}$, where $Z$ is the normalization 
constant (also called partition function) and $\beta\ge0$ is called the 
inverse temperature. This measure is called the \emph{Boltzmann} 
(or Gibbs) \emph{distribution}. 
If $q=2$ the Potts model is called the \emph{Ising model}.

A closely related model is the \emph{random cluster model} 
(also known as the FK-model), see Fortuin and Kasteleyn \cite{FK}.
It is defined on the graph $G=(V,E)$ by its state space 
$\O_{\rm RC}=\{A: A\subseteq E\}$ and the RC measure
\[
\mu_p(A) \;:=\; \mu^G_{p,q}(A) \;=\; 
\frac1{Z(G,\log(\frac1{1-p}),q)}\,
\left(\frac{p}{1-p}\right)^{\abs{A}}\,q^{C(A)},
\]
where $p\in(0,1)$, $C(A)$ is the number of connected components 
in the graph $(V,A)$, counting isolated vertices as a component, 
and $Z(\cdot,\cdot,\cdot)$ is the same normalization constant as 
for the Potts model (see \cite[Th. 1.10]{G1}).  
For a detailed introduction and related topics see \cite{G1}.

There is a tight connection between the Potts model and the 
random cluster model. Namely, if we set $p=1-e^{-\beta}$, 
we can translate a Potts configuration $\sigma\sim\pi_\beta$ to 
a random cluster state $A\sim\mu_p$ and vice versa.
To get a Potts configuration $\sigma\in\O_{\rm P}$ from 
$A\in\O_{\rm RC}$ assign a random color independently to each 
connected component of $(V,A)$. 
For the reverse way include all edges $e=\{e_1,e_2\}\in E$ 
with $\sigma(e_1)=\sigma(e_2)$ to $A$ with probability $p$. 
Hence sampling a Potts configuration according to $\pi_\beta$ is 
equivalent to sampling a RC state from $\mu_p$ if both models are 
defined on the same graph $G$ and $p=1-e^{-\beta}$.
For a proof see \cite{ES}.

\subsection{The heat-bath dynamics}

The \emph{heat bath dynamics} is probably the most studied 
Markov chain related to the Potts model (especially for $q=2$). 
This is because its mixing time is related 
to some properties of the underlying model. See e.g. 
the monograph of Martinelli \cite{M}.

The transition matrix of the 
heat-bath chain on $\O_{\rm P}$ is defined by
\begin{equation}
P_{\rm HB}(\sigma,\sigma^{v,k}) 
\;:=\; P_{{\rm HB},\beta,q}^G(\sigma,\sigma^{v,k}) \;=\;
\frac{1}{N}\,\frac{\pi_\beta(\sigma^{v,k})}
	{\sum_{l\in[q]}\pi_\beta(\sigma^{v,l})},
\label{eq:HB}
\end{equation}
where $\sigma^{v,k}(v)=k\in[q]$ and 
$\sigma^{v,k}(u)=\sigma(u)$, $u\neq v$. Otherwise 
$P_{\rm HB}(\sigma,\tau)=0$. 

This transition matrix describes the process that, at each step, 
chooses one vertex of the graph $G$ uniformly at random and changes 
only the color of this vertex with respect to the conditional 
distribution of $\pi_\beta$, given the color of all other 
vertices. It is easy to prove that this Markov chain is reversible 
with respect to $\pi_\beta$.

The spectral gap of $P_{\rm HB}$ is well known on some classes of graphs. 
For positive results see e.g. \cite{M}, \cite{BKMP}, \cite{MT} and 
\cite{LS}. 
We will state a few results in Section \ref{sec:main}.

\subsection{The Swendsen-Wang algorithm}

Now we turn to the Swendsen-Wang algorithm \cite{SW}. 
First we state the coupling of the Boltzmann distribution 
$\pi_{\beta,q}^G$ and the random-cluster measure $\mu_{p,q}^G$ 
of Edwards and Sokal \cite{ES}. 
Let us define 
\[
\O(A) \;:=\; \bigl\{\sigma\in\O_{\rm P}:\,\sigma(u)=\sigma(v)\;\;
	\forall\{u,v\}\in A\bigr\}, \quad A\in\O_{\rm RC},
\]
and
\[
E(\sigma) \;:=\; \bigl\{\{u,v\}\in E:\,\sigma(u)=\sigma(v)\bigr\},
	\quad \sigma\in\O_{\rm P}.
\]
Obviously, we have for $\sigma\in\O_{\rm P}$ and $A\subset E$ that 
$\sigma\!\in\!\O(A)\Leftrightarrow A\!\subset\!E(\sigma)$. 
Let $\sigma\in\O_{\rm P}$, $A\in\O_{\rm RC}$ and $p=1-e^{-\beta}$, 
then the joint measure of $(\sigma,A)\in\O_{\rm P}\times\O_{\rm RC}$ is
\[
\bar\mu(\sigma,A)\;:=\;\frac{1}{Z(G,\beta,q)}\,
	\left(\frac{p}{1-p}\right)^{\abs{A}}\,\ind(A\subset E(\sigma)).
\]
The marginal distributions of $\bar\mu$ are exactly $\pi_\beta$ and 
$\mu_p$, respectively.
The \emph{Swendsen-Wang algorithm} 
is based on this connection of  the random cluster and Potts models 
and performs the following two steps.
\begin{enumerate}
	\item[1)] Given a Potts configuration $\sigma_t\in\O_{\rm P}$ 
		on $G$, delete each edge of $E(\sigma_t)$ independently with 
		probability $1-p = e^{-\beta}$. This gives $A\in\O_{\rm RC}$.
	\item[2)] Assign a random color independently to each 
		connected component of $(V,A)$. Vertices of the same component 
		get the same color. This gives $\sigma_{t+1}\in\O_{\rm P}$.
\end{enumerate}
This can be seen as first choosing $A$ with respect to the 
conditional probability of $\bar\mu$ given $\sigma_t$ and then 
choosing $\sigma_{t+1}$ with respect to $\bar\mu$ given $A$.
If we define the $\abs{\O_{\rm P}}\times\abs{\O_{\rm RC}}$-matrix
\begin{equation}
T_{G,p,q}(\sigma,A) \;:=\; \frac{\bar\mu(\sigma,A)}{\bar\mu(\sigma,\O_{\rm RC})} 
\;=\; p^{\abs{A}}\,(1-p)^{\abs{E(\sigma)}-\abs{A}}\,
	\ind(A\subset E(\sigma))
\label{eq:T}
\end{equation}
and the $\abs{\O_{\rm RC}}\times\abs{\O_{\rm P}}$-matrix
\begin{equation}
T_{G,p,q}^*(A,\sigma) \;:=\; \frac{\bar\mu(\sigma,A)}{\bar\mu(\O_{\rm P},A)} 
\;=\; q^{-C(A)}\,\ind(\sigma\in\O(A)),
\label{eq:T*}
\end{equation}
then the \emph{transition matrix} of the 
Swendsen-Wang dynamics (on $\O_{\rm P}$) is defined by
\begin{equation}
P(\sigma,\tau) \;:=\; P_{\beta,q}^G(\sigma,\tau) 
\;=\; T_{G,p,q}\,T_{G,p,q}^*(\sigma,\tau), 
\qquad \sigma,\tau\in\O_{\rm P}.
\label{eq:P}
\end{equation}
Note that the transition matrix of the Swendsen-Wang dynamics 
on $\O_{\rm RC}$ is given by
\begin{equation}
\widetilde{P}(A,B) \;:=\; \widetilde{P}_{p,q}^G(A,B) 
\;=\; T_{G,p,q}^*\,T_{G,p,q}(A,B), \qquad A,B\in\O_{\rm RC}.
\label{eq:P2}
\end{equation}
\vspace{1mm}

\subsection{Spectral gap}

In the following we want to estimate the spectral gap of certain 
transition matrices of Markov chains. For an introduction to Markov 
chains and techniques to bound the convergence rate to the stationary 
distribution, see e.g. \cite{LPW}. 
If the transition matrix $P$ with state space $\Omega$ is 
ergodic, i.e. irreducible and aperiodic, and reversible with respect 
to $\pi$, i.e.
\[
\pi(x)\,P(x,y) \;=\; \pi(y)\,P(y,x) \quad \text{ for all } x,y\in\O,
\]
we know that 
$1=\xi_0>\xi_1\ge\dots\ge\xi_{\abs{\Omega}-1}>-1$, where 
the $\xi_i$ are the (real) eigenvalues of $P$. 
The \emph{spectral gap} of the Markov chain is defined by 
$\lambda(P)=1-\max\bigl\{\xi_1,\abs{\xi_{\abs{\O}-1}}\bigr\}$. 
If we are considering simultaneously a \emph{family} of graphs
  $G = (V_G,E_G)$, we say that the chain is \emph{rapidly mixing}
  for the given family if $\lambda(P)^{-1} = \mathcal{O}(|V_G|^C)$
  for some $C < \infty$.
  
The eigenvalues of the Markov chain can be expressed in terms of norms 
of the operator $P$ that maps from 
$L_2(\pi):=(\R^\O,\Vert\cdot\Vert_\pi)$ to $L_2(\pi)$, where 
scalar product and norm are given by 
$\l f,g\r_\pi=\sum_{x\in\O}f(x) g(x) \pi(x)$ and 
$\Vert f\Vert_\pi^2:=\sum_{x\in\O}f(x)^2\pi(x)$, respectively. 
The operator is defined by
\[
Pf(x) \;:=\; \sum_{y\in\O}\,P(x,y)\,f(y)
\]
and represents the expected value of the function $f$ after one step of 
the Markov chain starting in $x\in\O$. 
The \emph{operator norm} of $P$ is
\[
\Vert P\Vert_\pi \;:=\; \Vert P\Vert_{L_2(\pi)\to L_2(\pi)} 
\;=\; \max_{\Vert f\Vert_\pi\le1} \Vert Pf\Vert_\pi
\]
and we use $\Vert\cdot\Vert_\pi$ interchangeably for functions and 
operators, because it will be clear from the context which 
norm is used. It is well known that 
$\lambda(P)=1-\norm{P-S_\pi}_\pi$ for reversible $P$, 
where $S_\pi(x,y)=\pi(y)$. 
We know that reversible $P$ are self-adjoint, i.e. 
$P=P^*$, where $P^*$ is the (\emph{adjoint}) operator that 
satisfies 
$\l f, Pg\r_{\pi} \;=\; \l P^*f, g\r_{\pi}$ for all 
$f,g\in L_2(\pi)$.
Note that if we look at $T_{G,p,q}$ from \eqref{eq:T} as an 
operator mapping from $L_2(\mu_p)$ to $L_2(\pi_\beta)$, then 
$T_{G,p,q}^*$ is the adjoint operator of $T_{G,p,q}$. 
This proves that $P$ and $\widetilde{P}$ from \eqref{eq:P} and
\eqref{eq:P2} have the 
same spectral gap.

\section{Main result} \label{sec:main}

In this section we prove by comparison that the spectral gap 
of the Swendsen-Wang dynamics is bounded from below by some 
constant times the spectral gap of the heat-bath chain. 
Therefore we fix some value of $\beta\ge0$ and $q\ge2$. 

We will prove the following theorem.

\begin{theorem} \label{theorem:main}
Suppose that $P$ (resp. $P_{\rm HB}$) is the transition matrix of the 
Swendsen-Wang (resp. heat-bath) dynamics, which is reversible 
with respect to $\pi^{G}_{\beta,q}$.
Then
\[
\lambda(P) \;\ge\; c_{\rm SW}\,\lambda(P_{\rm HB}),
\]
where
\[
c_{\rm SW} \;=\; c_{\rm SW}(G,\beta,q) 
\;:=\; \frac{1}{2 q^2}\left(q\,e^{2\beta}\right)^{-4\Delta}
\]
with 
\[
\Delta \;:=\; \max_{v\in V_G}\,\deg_G(v).
\] 
\end{theorem}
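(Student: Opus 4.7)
The plan is to reduce the theorem to a Dirichlet form inequality $\mathcal{E}_P(f,f) \ge c\,\mathcal{E}_{P_{\rm HB}}(f,f)$ for all $f\in L_2(\pi_\beta)$, from which the spectral gap bound follows immediately. The reduction is valid because both chains are positive semi-definite: $P=T_{G,p,q}\,T_{G,p,q}^*$ by construction, and $P_{\rm HB}=\frac{1}{N}\sum_v P_{\rm HB}^{(v)}$ is an average of the orthogonal projections $P_{\rm HB}^{(v)}f(\sigma)=\E_\pi[f\mid\sigma|_{V\setminus v}]$. Hence both spectral gaps equal $1-\xi_1 = \min_{\E_\pi f = 0}\mathcal{E}(f,f)/\mathrm{Var}_\pi(f)$, and the Dirichlet form comparison passes directly to the spectral gap comparison.

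Both Dirichlet forms admit a clean conditional-variance representation. From $P=TT^*$ and the Edwards--Sokal joint measure $\bar\mu$ one gets
\[
\mathcal{E}_P(f,f) \;=\; \Vert f\Vert_\pi^2 - \Vert T^*f\Vert_{\mu_p}^2 \;=\; \E_{A\sim\mu_p}\bigl[\mathrm{Var}(f\mid A)\bigr],
\]
where $\sigma\mid A$ colours each connected component of $(V,A)$ independently and uniformly; and because each $P_{\rm HB}^{(v)}$ is a projection,
\[
\mathcal{E}_{P_{\rm HB}}(f,f) \;=\; \frac{1}{N}\sum_{v\in V}\E_\pi\bigl[\mathrm{Var}_\pi(f\mid\sigma|_{V\setminus v})\bigr].
\]
Fix a vertex $v$. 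I would lower-bound the first expectation by restricting to the event $B_v=\{A:v\text{ is isolated in }(V,A)\}$; on $B_v$ the singleton $\{v\}$ is its own component, so the law of $\sigma(v)$ given $A$ and $\sigma|_{V\setminus v}$ is exactly uniform on $[q]$. The law of total variance then lets me single out $v$'s component and lower-bound $\mathrm{Var}(f\mid A)$ on $B_v$ by a uniform-colour variance at $v$.

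Two elementary inequalities then close the estimate. First, since the conditional Gibbs law $\nu=\pi_\beta(\cdot\mid\sigma|_{V\setminus v})$ satisfies $\nu(k)\le 1 = q\cdot\mathrm{Unif}(k)$, a one-line calculation gives $\mathrm{Var}_{\mathrm{Unif}}(g)\ge q^{-2}\mathrm{Var}_\nu(g)$ for any $g\colon[q]\to\R$, trading the uniform-colour variance on $B_v$ for the Gibbs-conditional variance at $v$. Second, letting $m_v(\sigma)$ denote the number of neighbours of $v$ with the same colour as $\sigma(v)$, a joint-measure calculation exploits the Edwards--Sokal identity $e^\beta(1-p)=1$: the Boltzmann factor $e^{\beta m_v(\sigma)}$ cancels exactly against $(1-p)^{m_v(\sigma)}$, so $\sum_k \pi(\tau,k)(1-p)^{m_v(\tau,k)}$ is independent of $k$; dividing by $\pi|_{V\setminus v}(\tau)$ then produces a density at least $q/\sum_k e^{\beta m_v(\tau,k)} \ge e^{-\beta\Delta}$.

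Chaining these estimates yields $\mathcal{E}_P(f,f)\ge q^{-2}e^{-\beta\Delta}\,\E_\pi[\mathrm{Var}_\pi(f\mid\sigma|_{V\setminus v})]$ for every $v$, and summing over $v$ gives the desired Dirichlet form comparison with a constant at least of the stated order. The main technical obstacle I anticipate is the joint-measure bookkeeping in the last step: one must identify the law of $\sigma|_{V\setminus v}$ under $\bar\mu$ restricted to $B_v$, compare it with the Gibbs marginal $\pi|_{V\setminus v}$, and avoid accumulating factors exponential in $|E(\sigma)|$. The identity $e^\beta(1-p)=1$ --- essentially the reason the Edwards--Sokal coupling exists --- is what keeps the loss bounded by $e^{\beta\Delta}$ rather than $e^{\beta|E|}$, so the whole argument hinges on using it at the right moment.
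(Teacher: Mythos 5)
Your proposal is correct, and it takes a genuinely different route from the paper. The paper works at the level of transition kernels: it sandwiches $P$ between the three-step chain $Q=P_{\rm HB}\,P\,P_{\rm HB}$ and itself, using subgraph estimates (its Lemmas~\ref{lemma:G0} and~\ref{lemma:P}) to show $P(\sigma_1,\tau_1)\le c_3^{2\Delta}P(\sigma_2,\tau_2)$ for single-site perturbations, and then passes through $P^2$ to convert the kernel comparison into a Dirichlet-form comparison, losing a factor $2$ and squaring the constant. You instead exploit the structural identities $\mathcal{E}_P(f,f)=\E_{A\sim\mu_p}\bigl[\Var(f\mid A)\bigr]$ (valid because $P=TT^*$) and $\mathcal{E}_{P_{\rm HB}}(f,f)=\tfrac1N\sum_v\E_\pi\bigl[\Var_\pi(f\mid\sigma|_{V\setminus v})\bigr]$ (valid because each single-site update is an orthogonal projection), and compare them directly by restricting to the event that $v$ is isolated in the Edwards--Sokal percolation step; the positive semi-definiteness of both operators justifies identifying the spectral gap with $1-\xi_1$ on both sides, so no detour through $P^2$ is needed. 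All the individual steps check out: the law-of-total-variance restriction to $B_v$, the computation $\sum_{A\ni v\text{ isol.}}\bar\mu(\sigma,A)=\pi(\sigma)(1-p)^{m_v(\sigma)}$ with the cancellation $e^{\beta}(1-p)=1$ making the weight independent of $\sigma(v)$, the density bound $q/\sum_k e^{\beta m_v(\tau,k)}\ge e^{-\beta\Delta}$, and the elementary variance comparison $\Var_{\rm Unif}\ge q^{-2}\Var_\nu$ from $\nu(k)\le q\cdot{\rm Unif}(k)$. What your approach buys is a substantially better constant, $q^{-2}e^{-\beta\Delta}=q^{-2}(1-p)^{\Delta}$ in place of $\tfrac{1}{2q^2}(qe^{2\beta})^{-4\Delta}$ (and since the former dominates the latter for $q\ge2$, the theorem as stated follows a fortiori), together with a shorter argument that avoids Lemmas~\ref{lemma:Q}--\ref{lemma:P} entirely; what the paper's kernel-level comparison buys is flexibility, as the same machinery is reused almost verbatim for the single-large-degree-vertex variant Theorem~\ref{theorem:main}$'$, where your argument would need the isolated-vertex event and the flip symmetry to be re-examined at the exceptional vertex.
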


By this result we get that the Swendsen-Wang dynamics is rapidly 
mixing (on graphs with bounded degree), if the usual heat-bath 
dynamics is rapidly mixing. 
Unfortunately, this result seems to be off by a factor of $N$,  
because we compare the SW dynamics with a Markov chain that changes 
only the color of one vertex of the graph per step. 
A similar comparison with the systematic scan heat-bath dynamics  
would give a better bound (see e.g. \cite{Ha}). 
But this does not seem to be possible with our techniques.

Now we will state some results that follow from Theorem 
\ref{theorem:main}.

The first corollary deals with the two-dimensional Potts model.
Recall that the two-dimensional square lattice 
$G_L$ of side length $L$ is given by $V=\{1,\dots,L\}^2\subset\Z^2$ and 
$E=\{\{u,v\}\in V^2:\,\abs{u-v}=1\}$.

Using the known lower bounds on the spectral gap of the 
heat bath dynamics on the square lattice \cite{MOS,BDC,LS}, 
we can obtain the following result:

\begin{corollary} \label{coro:2d}
Let $G_L$ be the square lattice of side-length $L$, $N=L^2$. 
Then there exist 
constants $c_{\beta}=c_{\beta}(\beta,q)>0$ and $m>0$ such that
\begin{itemize}
\item\quad $\lambda(P_{\beta,q}^{G_L})^{-1}\;\le\;c_{\beta} N$
	\qquad for $\beta<\beta_c(q)=\ln(1+\sqrt{q})$ 
\vspace{2mm}
\item\quad $\lambda(P_{\beta,q}^{G_L})^{-1}\;\le\;c\,N^m$
	\qquad for $q=2$ and $\beta=\beta_c(2)$, 
\end{itemize}
where $c^{-1}=c_{\rm SW}(G_L,\beta_c,2)$.
\end{corollary}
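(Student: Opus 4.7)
The plan is to read Corollary~\ref{coro:2d} as a direct combination of Theorem~\ref{theorem:main} with known estimates on the spectral gap of the heat-bath dynamics for the two-dimensional square lattice. The central observation is that every vertex of $G_L$ has degree at most $\Delta = 4$, so the constant $c_{\rm SW}(G_L,\beta,q)$ appearing in Theorem~\ref{theorem:main} depends only on $\beta$ and $q$ and is bounded away from $0$ uniformly in $L$. Consequently, any polynomial upper bound on $\lambda(P_{\rm HB})^{-1}$ for $G_L$ transfers, after multiplication by $c_{\rm SW}^{-1}$, to the same bound on $\lambda(P)^{-1}$.

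For the first bullet, I would invoke the classical fact that on $\Z^2$ the single-site heat-bath dynamics for the $q$-state Potts model has inverse spectral gap $O(N)$ throughout the subcritical phase: Martinelli--Olivieri--Schonmann~\cite{MOS} reduce such a bound to a strong spatial mixing condition, and the required decay of correlations at every $\beta < \beta_c(q) = \ln(1+\sqrt{q})$ was established by Beffara--Duminil-Copin~\cite{BDC}. Plugging this into Theorem~\ref{theorem:main} gives
\[
\lambda(P_{\beta,q}^{G_L})^{-1} \;\le\; c_{\rm SW}(G_L,\beta,q)^{-1}\,\lambda(P_{\rm HB})^{-1} \;\le\; c_\beta\, N,
\]
with a constant $c_\beta$ depending only on $\beta$ and $q$, as required.

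For the second bullet, Lubetzky--Sly~\cite{LS} have shown that at the critical point $\beta_c(2)$ of the two-dimensional Ising model the heat-bath dynamics satisfies $\lambda(P_{\rm HB})^{-1} = O(N^{m_0})$ for some absolute $m_0 > 0$. The same application of Theorem~\ref{theorem:main}, with the explicit constant $c_{\rm SW}(G_L,\beta_c,2)$ read off from its definition, then yields $\lambda(P)^{-1} \le c\, N^m$ with $m = m_0$ and $c = c_{\rm SW}(G_L,\beta_c,2)^{-1}$, matching the stated formula for $c$.

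There is no substantial technical obstacle here: all of the real work is encapsulated in Theorem~\ref{theorem:main} (for the comparison) and in the cited heat-bath estimates (for the input bound on $\lambda(P_{\rm HB})$). The only point to verify carefully is that $\Delta$ in $c_{\rm SW}$ is the maximum graph degree, which equals $4$ for every $L$, so that the prefactor $c_{\rm SW}^{-1}$ is indeed independent of the side length and the $L$-dependence is carried entirely by $N = L^2$.
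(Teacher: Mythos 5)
Your proposal is correct and follows essentially the same route as the paper: the degree bound $\Delta=4$ makes $c_{\rm SW}$ uniform in $L$, the subcritical $O(N)$ heat-bath bound comes from Beffara--Duminil-Copin's exponential decay fed into the Martinelli--Olivieri--Schonmann framework, the critical $q=2$ case comes from Lubetzky--Sly, and everything is combined via Theorem~\ref{theorem:main}. The only detail glossed over is the intermediate step (Alexander's theorem, cited explicitly in the paper) translating exponential decay of random-cluster connectivities into the spatial-mixing condition that the heat-bath gap estimate actually requires.
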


Not much is known about the constant $c_\beta$, but there is 
an explicit bound on the exponent $m$ in terms of an (unknown) 
crossing probability in the random cluster model, 
see \cite{LS}.

\begin{proof}[Proof of Corollary \ref{coro:2d}]
We have to show that the heat-bath dynamics is rapidly mixing 
in the cases stated above. The result for $q=2$ at the critical 
temperature is given by Lubetzky and Sly \cite{LS}.
For the high temperature result we need the recent result of 
Beffara and Duminil-Copin \cite[Th.~2]{BDC}. They prove that 
one has exponential decay of connectivity in the RC model on $\Z^2$ 
for all $q\ge1$ and 
$p<\frac{\sqrt{q}}{1+\sqrt{q}}=1-e^{-\beta_c(q)}$. 
Together with \cite[Th.~3.6]{A} and \cite[Th.~3.2]{MOS} we get 
that there exists a constant $\widetilde{c}_\beta>0$ such that
$\lambda(P_{\rm HB})^{-1}\le\widetilde{c}_\beta\,N$ on $G_L$ for 
$\beta<\beta_c(q)$. 
Thus the result follows from Theorem~\ref{theorem:main}.
\end{proof}

The next corollary relies on a result of Hayes \cite{Ha}, 
who gives a simple condition on $\beta$ for rapid mixing 
of the heat-bath dynamics for the Ising model on graphs 
of bounded degree. 
Especially, we use his result for planar graphs.

\begin{corollary} \label{coro:planar}
Let $G$ be a graph with maximum degree $\Delta$. 
Then for every $q\ge2$ and $\eps>0$ the spectral gap of 
the Swendsen-Wang dynamics satisfies
\[
\lambda(P_{\beta,q}^{G})^{-1}
	\;\le\;\frac{c}{\eps}\,N \log N,
\] 
where $c^{-1}=\frac14\, c_{\rm SW}(G,\beta,q)$, if
\[
\beta \,\le\, 2 \,\frac{1-\eps}{\Delta}
\]
or if $G$ is planar, $\Delta\ge6$ and
\[
\beta \,\le\, \frac{1-\eps}{\sqrt{3 (\Delta-3)}}.
\]
\end{corollary}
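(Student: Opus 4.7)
The plan is to combine the comparison inequality of Theorem~\ref{theorem:main} with the single-site rapid-mixing criterion of Hayes~\cite{Ha}. By Theorem~\ref{theorem:main} we have $\lambda(P_{\beta,q}^G) \ge c_{\rm SW}(G,\beta,q)\,\lambda(P_{\rm HB})$, so it suffices to prove $\lambda(P_{\rm HB})^{-1} \le (4/\eps)\,N \log N$ under either hypothesis of the corollary; absorbing the factor $4$ into $c^{-1} = \tfrac14 c_{\rm SW}$ then delivers the claim.

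The core input is Hayes's criterion, which says that the heat-bath dynamics mixes in time $O(N\log N/\eps)$ provided the $L^2$-operator norm of a Dobrushin-type ``influence matrix'' $M$ associated with $(G,\beta,q)$ is bounded by $1-\eps$. This matrix is supported on the edges of $G$, and its entry $M(u,v)$ is controlled by an explicit single-site influence coefficient. For the $q$-state Potts heat bath this coefficient is at most $\tanh(\beta/2)\le\beta/2$, matching the Ising estimate used by Hayes. Consequently $\|M\|_2 \le (\beta/2)\,\rho(G)$, where $\rho(G)$ is the spectral radius of the adjacency matrix of $G$, so Hayes's hypothesis is satisfied whenever $(\beta/2)\,\rho(G)\le 1-\eps$.

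It then remains to insert the appropriate bound on $\rho(G)$ in each of the two regimes. For an arbitrary graph of maximum degree $\Delta$ one uses the trivial bound $\rho(G)\le\Delta$, which gives the first hypothesis $\beta\le 2(1-\eps)/\Delta$. For a planar graph with $\Delta\ge 6$ one instead uses the sharper planarity estimate $\rho(G)\le 2\sqrt{3(\Delta-3)}$ (the same bound underlying Hayes's planar application), which yields $\beta\le(1-\eps)/\sqrt{3(\Delta-3)}$. Combining either resulting estimate $\lambda(P_{\rm HB})^{-1}\le(4/\eps)N\log N$ with Theorem~\ref{theorem:main} completes the proof.

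The main obstacle I would anticipate is verifying that the Dobrushin single-site influence coefficient for the $q$-state Potts heat bath really satisfies the same $\tanh(\beta/2)$ bound as in the Ising case, so that Hayes's criterion transfers uniformly in $q\ge2$ without loss of constants. This reduces to a direct estimate on the variation of the conditional marginal $\pi_\beta(\,\cdot\,|\sigma_{V\setminus\{v\}})$ defined in \eqref{eq:HB} when a single neighboring spin is modified; it is routine but should be recorded carefully before citing Hayes in the general Potts setting.
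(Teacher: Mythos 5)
Your proposal is correct and follows essentially the same route as the paper: reduce via Theorem~\ref{theorem:main} to the heat-bath chain, invoke Hayes's influence-matrix criterion with the spectral-radius bounds $\rho\le\Delta$ and $\rho\le2\sqrt{3(\Delta-3)}$ for planar graphs, and convert mixing time to spectral gap. The one point you flag as an obstacle --- extending the single-site influence bound from $q=2$ to general $q$ --- is exactly the step the paper also identifies (the analogue of Observation~11 in \cite{Ha}, handled by induction on $q$).
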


If we state the bounds on the temperature in terms of the 
RC parameter $p=1-e^{-\beta}$, this leads to the bounds 
$p\le(1-\eps)/\Delta$ in general and 
$p\le(1-\eps)/(3\sqrt{\Delta-3})$ for planar graphs.

\begin{proof}[Proof of Corollary \ref{coro:planar}]
In \cite[Prop.~14, Coro.~19]{Ha} the result is given for 
$q=2$ and $\beta\le(1-\eps)/\rho$ in terms of mixing time, 
where $\rho$ is the principal eigenvalue of the graph. 
The bounds on $\rho$ are stated therein.
Note that the different definition 
of $\pi_\beta$ causes the additional factor 2.
To generalize this result to the $q$-state Potts model one 
only has to prove an inequality like in Observation~11 in 
\cite{Ha}. 
But this is easily done, e.g. by induction over $q$.
Finally we have to bound the spectral gap in terms of the 
mixing time. For this see e.g. \cite[Th.~12.4]{LPW}.
\end{proof}

\begin{remark}
Note that one can read the statement of Theorem \ref{theorem:main} 
also as an upper bound on the spectral gap of the heat bath 
dynamics. See e.g. \cite{BCT} for a result on slow mixing of the 
Swendsen-Wang dynamics.
\end{remark}

\subsection{Proof of Theorem \ref{theorem:main}}

See Diaconis and Saloff-Coste \cite{DSC2} for an introduction 
to comparison techniques for Markov chains.
For the comparison with the Swendsen-Wang dynamics we will 
analyze the Markov chain with transition matrix 
\begin{equation}
Q=P_{\rm HB}\,P\,P_{\rm HB}.
\label{eq:Q}
\end{equation}

Since $P_{\rm HB}$ and $P$ are reversible with respect to $\pi_\beta$, 
$Q$ is also reversible.

\begin{lemma}	\label{lemma:Q}
With the definitions from above we get 
\[
\lambda(Q) \;\ge\; \lambda(P_{\rm HB}^2) \;\ge\; \lambda(P_{\rm HB}).
\]
\end{lemma}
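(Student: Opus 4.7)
The plan is to exploit that $P$ is a positive semi-definite operator on $L_2(\pi_\beta)$, and then to compare $Q$ with $P_{\rm HB}^2$ through an operator inequality. The key observation is that the Edwards--Sokal factorisation gives PSD-ness of $P$ essentially for free.

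First I would use the identification $P = T_{G,p,q}\,T^*_{G,p,q}$ from the Spectral gap subsection, where $T_{G,p,q}:L_2(\mu_p)\to L_2(\pi_\beta)$ and $T^*_{G,p,q}$ is its adjoint. This factorisation immediately gives that $P$ is self-adjoint and positive semi-definite on $L_2(\pi_\beta)$, because
\[
\l Pf,f\r_{\pi_\beta} \;=\; \l T^*_{G,p,q}f,\,T^*_{G,p,q}f\r_{\mu_p} \;\ge\; 0.
\]
Combined with the fact that $P$ is a reversible stochastic matrix with $\norm{P}_{\pi_\beta}=1$, this shows that the spectrum of $P$ lies in $[0,1]$, so $0 \preceq P \preceq I$ and hence $I-P \succeq 0$.

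The second step is the key operator identity
\[
P_{\rm HB}^2 \,-\, Q \;=\; P_{\rm HB}\,(I-P)\,P_{\rm HB},
\]
which follows directly from the definition \eqref{eq:Q}. Since $P_{\rm HB}$ is self-adjoint on $L_2(\pi_\beta)$ and $I-P \succeq 0$, the right-hand side is positive semi-definite, giving the operator inequality $Q \preceq P_{\rm HB}^2$. Both $Q$ and $P_{\rm HB}^2$ are self-adjoint, are positive semi-definite, and fix the constant functions with eigenvalue $1$, so each leaves the $\pi_\beta$-orthogonal complement of the constants invariant. Applying the Courant--Fischer min-max principle on this complement yields that the second largest eigenvalue of $Q$ is bounded above by that of $P_{\rm HB}^2$. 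For PSD operators fixing the constants, the spectral gap equals one minus the second largest eigenvalue, which gives the first inequality $\lambda(Q) \ge \lambda(P_{\rm HB}^2)$.

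For the second inequality, the eigenvalues of $P_{\rm HB}^2$ are the squares of those of $P_{\rm HB}$, so the second largest eigenvalue of $P_{\rm HB}^2$ equals $(1-\lambda(P_{\rm HB}))^2$. Therefore
\[
\lambda(P_{\rm HB}^2) \;=\; 1 - \bigl(1-\lambda(P_{\rm HB})\bigr)^2 \;=\; \lambda(P_{\rm HB})\bigl(2-\lambda(P_{\rm HB})\bigr) \;\ge\; \lambda(P_{\rm HB}),
\]
since $\lambda(P_{\rm HB}) \in [0,1]$. I do not anticipate any real obstacle in this argument; once PSD-ness of $P$ is recognised, the identity above and the min-max principle do all of the work.
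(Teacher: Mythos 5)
Your proof is correct, and it is the standard argument the paper implicitly relies on (the lemma is stated without proof): the Edwards--Sokal factorisation $P=T_{G,p,q}T^*_{G,p,q}$ gives positive semi-definiteness of $P$, whence $Q=P_{\rm HB}\,P\,P_{\rm HB}\preceq P_{\rm HB}^2$ and the eigenvalue comparison follows on the orthogonal complement of the constants. All the individual steps check out, including the reduction of the spectral gap to $1-\xi_1$ for PSD stochastic operators and the final elementary inequality $1-(1-\lambda)^2\ge\lambda$.
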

\begin{proof}
The second inequality is trivial. For the first inequality note that, 
with $S_{\pi_\beta}(\sigma,\tau)=\pi_\beta(\tau)$ for 
$\sigma,\tau\in\O_{\rm P}$, we have 
$Q-S_{\pi_\beta}=(P_{\rm HB}-S_{\pi_\beta}) P (P_{\rm HB}-S_{\pi_\beta})$, 
which is self-adjoint. Hence we can write the spectral gap as
\[\begin{split}
\lambda(Q) \;&=\; 1- \norm{Q-S_{\pi_\beta}}_{\pi_\beta} 
\;=\;1- \norm{(P_{\rm HB}-S_{\pi_\beta}) P 
			(P_{\rm HB}-S_{\pi_\beta})}_{\pi_\beta} \\
&=\; 1- \lim_{n\to\infty}\norm{\left((P_{\rm HB}-S_{\pi_\beta}) P 
			(P_{\rm HB}-S_{\pi_\beta})\right)^n}_{\pi_\beta}^{1/n} \\
&\ge\; 1- \lim_{n\to\infty}\norm{\left((P_{\rm HB}-S_{\pi_\beta})^2 P 
			\right)^{n-1}}_{\pi_\beta}^{1/n} \\
&\ge\; 1-\norm{(P_{\rm HB}-S_{\pi_\beta})^2 P}_{\pi_\beta} 
\;\ge\; 1-\norm{(P_{\rm HB}-S_{\pi_\beta})^2}_{\pi_\beta} \\
&=\; \lambda(P_{\rm HB}^2), 
\end{split}\]
where we use $\norm{P}_{\pi_\beta}\le1$ and 
$\norm{P_{\rm HB}-S_{\pi_\beta}}_{\pi_\beta}\le1$.
\end{proof}

To prove a lower bound on the spectral gap of $P$ it remains to 
prove $\lambda(P)\ge c\lambda(Q)$ for some $c>0$.
For this we need an estimate of the transition probabilities of the 
Swendsen-Wang dynamics on $G$ with respect to some subgraph of $G$.
Therefore we prove the following lemma.

\begin{lemma} \label{lemma:G0}
Let $G=(V,E)$ be a graph and $G_0=(V,E_0)$ be a spanning 
subgraph of $G$ with $E_0\subset E$. Define $P_G := P_{\beta,q}^G$. 
Then
\vspace{2mm}
\[
c_1^{\abs{E\setminus E_0}}\,P_{G_0}(\sigma,\tau) 
\;\le\; P_G(\sigma,\tau) 
\;\le\; c_2^{\abs{E\setminus E_0}}\,P_{G_0}(\sigma,\tau)
\vspace{2mm}
\]
for all $\sigma,\tau\in\O_{\rm P}$, where
\[
c_1 \;=\; c_1(\beta) \;:=\; e^{-\beta}
\]
and
\[
c_2 \;=\; c_2(\beta,q) \;:=\; 1+q\, (e^{\beta}-1).
\]
\end{lemma}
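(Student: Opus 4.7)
The plan is to expand both transition probabilities using the formulas \eqref{eq:T} and \eqref{eq:T*}, then compare the sums term-by-term after splitting the edge set as $E = E_0 \sqcup D$ with $D := E\setminus E_0$.

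First I would write
\[
P_G(\sigma,\tau) \;=\; \sum_{A\subset E(\sigma)\cap E(\tau)}
	p^{\abs{A}}(1-p)^{\abs{E(\sigma)}-\abs{A}}\,q^{-C(A)},
\]
and the analogous expression for $P_{G_0}$ with $E$ replaced by $E_0$ throughout. For each $\sigma\in\O_{\rm P}$ set $D(\sigma):=D\cap E(\sigma)$ and $E_0(\sigma):=E_0\cap E(\sigma)$, so that $E(\sigma)=E_0(\sigma)\sqcup D(\sigma)$. Any $A\subset E(\sigma)\cap E(\tau)$ factors uniquely as $A=A_0\sqcup A_1$ with $A_0\subset E_0(\sigma)\cap E_0(\tau)$ and $A_1\subset D(\sigma)\cap D(\tau)$. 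Using this decomposition,
\[
P_G(\sigma,\tau) \;=\; \sum_{A_0}\,p^{\abs{A_0}}(1-p)^{\abs{E_0(\sigma)}-\abs{A_0}}\,
	\Biggl[\sum_{A_1}p^{\abs{A_1}}(1-p)^{\abs{D(\sigma)}-\abs{A_1}}\,q^{-C(A_0\cup A_1)}\Biggr],
\]
where the outer sum is over $A_0\subset E_0(\sigma)\cap E_0(\tau)$ and the inner one over $A_1\subset D(\sigma)\cap D(\tau)$. The corresponding expression for $P_{G_0}(\sigma,\tau)$ has $q^{-C(A_0)}$ in place of the inner bracket.

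The heart of the argument is therefore to sandwich the inner bracket between $c_1^{\abs{D}}\,q^{-C(A_0)}$ and $c_2^{\abs{D}}\,q^{-C(A_0)}$ \emph{uniformly in $A_0$}. For this I would use the elementary bounds
\[
q^{-C(A_0)} \;\le\; q^{-C(A_0\cup A_1)} \;\le\; q^{\abs{A_1}}\,q^{-C(A_0)},
\]
valid because adding an edge to a subgraph decreases the number of connected components by either $0$ or $1$. Substituting the lower bound and summing the resulting geometric sum gives
\[
\sum_{A_1\subset D(\sigma)\cap D(\tau)} p^{\abs{A_1}}(1-p)^{\abs{D(\sigma)}-\abs{A_1}}
	\;=\;(1-p)^{\abs{D(\sigma)\setminus D(\tau)}}\;\ge\;(1-p)^{\abs{D}}\;=\;c_1^{\abs{D}},
\]
which yields the lower inequality after factoring out $q^{-C(A_0)}$. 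Substituting the upper bound yields instead
\[
\sum_{A_1}(pq)^{\abs{A_1}}(1-p)^{\abs{D(\sigma)}-\abs{A_1}}
	\;=\;(1-p)^{\abs{D(\sigma)\setminus D(\tau)}}\,(1-p+pq)^{\abs{D(\sigma)\cap D(\tau)}},
\]
which I would bound by $c_2^{\abs{D}}$ using $1-p+pq \le 1+q(e^{\beta}-1) = c_2$ (a short calculation with $p=1-e^{-\beta}$ gives $c_2-(1-p+pq) = q(e^{\beta/2}-e^{-\beta/2})^2+p\ge 0$) and $1-p\le c_2$.

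Plugging these two-sided bounds back into the outer sum produces exactly $c_1^{\abs{D}}P_{G_0}(\sigma,\tau)\le P_G(\sigma,\tau)\le c_2^{\abs{D}}P_{G_0}(\sigma,\tau)$. The only conceptual step is recognizing the correct factorization by the edge partition; the main technical obstacle is the control of $q^{-C(A)}$ under enlargement of $A$, which I handle via the ``one edge merges at most two components'' observation. Everything else is a direct binomial-type sum.
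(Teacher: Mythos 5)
Your proof is correct and rests on the same key facts as the paper's: the random-cluster expansion of $P_G(\sigma,\tau)$ and the observation that adjoining an edge decreases the component count by at most one (whence $q^{-C(A_0)}\le q^{-C(A_0\cup A_1)}\le q^{\abs{A_1}}q^{-C(A_0)}$), together with the bound $1-p+pq\le c_2$. The only difference is bookkeeping: the paper removes one edge at a time and iterates $\abs{E\setminus E_0}$ times, while you factor the sum over the partition $E=E_0\sqcup D$ in one pass and evaluate the resulting binomial sums explicitly; both routes give the same constants.
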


\begin{proof}
The first inequality is already known from the proof of Lemma 3.3 in 
\cite{BCT}, but we state it here for completeness. 
Let $p=1-e^{-\beta}$, then 
\[\begin{split}
P_G(\sigma,\tau) \;&=\; \sum_{A\subset E} \,p^{\abs{A}}\, 
	(1-p)^{\abs{E(\sigma)}-\abs{A}}\, q^{-C(A)}\, 
	\ind\bigl(A\subset E(\sigma)\cap E(\tau)\bigr)\\
&\ge\; \sum_{A\subset E_0} \,p^{\abs{A}}\, 
	(1-p)^{\abs{E(\sigma)}-\abs{A}}\, q^{-C(A)}\, 
	\ind(A\subset E(\sigma)\cap E(\tau))\\
&\ge\; (1-p)^{\abs{E(\sigma)}-\abs{E_0(\sigma)}}\,P_{G_0}(\sigma,\tau)
\;\ge\; (1-p)^{\abs{E\setminus E_0}}\,P_{G_0}(\sigma,\tau).
\end{split}\]
For the second inequality suppose for now $E_0=E\setminus\{e\}$ 
for some $e\in E$ and note that $C(A\cup\{e\})\ge C(A)-1$. We get

\[\begin{split}
P_G(\sigma,\tau) \;&=\; \sum_{A\subset E(\sigma)\cap E(\tau)} 
	\,p^{\abs{A}}\, (1-p)^{\abs{E(\sigma)}-\abs{A}}\, q^{-C(A)} \\
&=\; \sum_{\substack{A\subset E(\sigma)\cap E(\tau):\\ e\in A}} 
	\,p^{\abs{A}}\, (1-p)^{\abs{E(\sigma)}-\abs{A}}\, q^{-C(A)} \\
&\qquad\quad + \sum_{\substack{A\subset E(\sigma)\cap E(\tau):\\ 
		e\notin A}} 
	\,p^{\abs{A}}\, (1-p)^{\abs{E(\sigma)}-\abs{A}}\, q^{-C(A)} \\
&\le\; \sum_{A'\subset E_0(\sigma)\cap E_0(\tau)} 
	\,p^{\abs{A'\cup\{e\}}}\, 
	(1-p)^{\abs{E(\sigma)}-\abs{A'\cup\{e\}}}\, q^{-C(A'\cup\{e\})} \\
&\qquad\quad + \sum_{A'\subset E_0(\sigma)\cap E_0(\tau)} 
	\,p^{\abs{A'}}\, (1-p)^{\abs{E(\sigma)}-\abs{A'}}\, q^{-C(A')} \\
&\le\; \frac{q\,p}{1-p}\,	\sum_{A'\subset E_0(\sigma)\cap E_0(\tau)} 
	\,p^{\abs{A'}}\, (1-p)^{\abs{E(\sigma)}-\abs{A'}}\, q^{-C(A')} \\
&\qquad\quad + \,(1-p)^{\abs{E(\sigma)}-\abs{E_0(\sigma)}}\,
	P_{G_0}(\sigma,\tau) \\
&\le\; \left(\frac{q\,p}{1-p}\, \,+\, 1\right)\,P_{G_0}(\sigma,\tau)
\;=\; \bigl(1 \,+\, q\, (e^{\beta}-1)\bigr)\,P_{G_0}(\sigma,\tau).
\end{split}\]
For $\abs{E\setminus E_0}>1$ one can iterate this technique 
$\abs{E\setminus E_0}$ times. This proves the statement.\\
\end{proof}

We use this lemma to prove that the transition probability from 
$\sigma$ to $\tau$ is similar to the probability of going from 
a neighbor of $\sigma$ to a neighbor of $\tau$. 
Recall that $\sigma^{v,k}$ is defined by $\sigma^{v,k}(v)=k\in[q]$ and 
$\sigma^{v,k}(u)=\sigma(u)$, $u\neq v$.

\begin{lemma} \label{lemma:P}
Let $\sigma,\tau\in\O_{\rm P}$, $v\in V_G$ and $k,l\in[q]$.
Then
\[
P_G(\sigma^{v,k},\tau^{v,l}) \;\le\; 
c_3^{\deg_G(v)}\;
P_G(\sigma,\tau)
\]
with
\[
c_3 \;=\; c_3(\beta,q) \;:=\; q\,e^{2\beta} - (q-1)\,e^\beta.
\]
\end{lemma}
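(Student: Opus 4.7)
The plan is to pass to the subgraph $G_0 = (V, E_0)$ obtained by deleting from $G$ every edge incident to the distinguished vertex $v$, so that $v$ becomes isolated in $G_0$ and $|E \setminus E_0| = \deg_G(v)$. The key observation is that because no edge of $E_0$ touches $v$, the value of $\sigma(v)$ is irrelevant to the expression
\[
P_{G_0}(\sigma, \tau) \;=\; \sum_{A \subset E_0(\sigma) \cap E_0(\tau)} p^{|A|}(1-p)^{|E_0(\sigma)| - |A|} q^{-C(A)} :
\]
the sets $E_0(\sigma)$ and $E_0(\tau)$ depend only on the restrictions of $\sigma, \tau$ to $V \setminus \{v\}$, and $C(A)$ is a purely combinatorial quantity. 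Hence
\[
P_{G_0}(\sigma^{v,k}, \tau^{v,l}) \;=\; P_{G_0}(\sigma, \tau) \qquad \text{for every } k, l \in [q].
\]

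With this invariance in hand, I would sandwich $P_G(\sigma^{v,k}, \tau^{v,l})$ between two applications of Lemma \ref{lemma:G0}. The upper bound there gives
\[
P_G(\sigma^{v,k}, \tau^{v,l}) \;\le\; c_2^{\deg_G(v)}\, P_{G_0}(\sigma^{v,k}, \tau^{v,l}) \;=\; c_2^{\deg_G(v)}\, P_{G_0}(\sigma, \tau),
\]
and the lower bound there, rearranged, gives
\[
P_{G_0}(\sigma, \tau) \;\le\; c_1^{-\deg_G(v)}\, P_G(\sigma, \tau) \;=\; e^{\beta \deg_G(v)}\, P_G(\sigma, \tau).
\]
Chaining these two inequalities produces
\[
P_G(\sigma^{v,k}, \tau^{v,l}) \;\le\; \bigl(c_2\, e^{\beta}\bigr)^{\deg_G(v)}\, P_G(\sigma, \tau).
\]

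It then remains to verify the algebraic identity $c_2(\beta, q)\, e^{\beta} = c_3(\beta, q)$, that is,
\[
\bigl(1 + q(e^\beta - 1)\bigr)\, e^\beta \;=\; q\, e^{2\beta} - (q-1)\, e^\beta,
\]
which is immediate by expanding the left-hand side. This gives exactly the stated bound $c_3^{\deg_G(v)}$. The only step that requires any thought is the first one — recognising that removing precisely the edges at $v$ both decouples the color of $v$ from the RC weight on $G_0$ and makes $|E \setminus E_0|$ equal to $\deg_G(v)$; everything else is a two-sided application of the previous lemma plus a one-line calculation, so I do not expect a substantive obstacle.
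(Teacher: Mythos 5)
Your proof is correct and follows essentially the same route as the paper: isolate $v$ by deleting its incident edges, observe that $P_{G_0}(\sigma^{v,k},\tau^{v,l})=P_{G_0}(\sigma,\tau)$, and apply both directions of Lemma~\ref{lemma:G0} to pick up the factor $(c_2/c_1)^{\deg_G(v)}=c_3^{\deg_G(v)}$. The algebraic identity $c_2 e^\beta = c_3$ checks out, so there is nothing to add.
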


\begin{proof}
Define $E_v:=\{e\in E_G: v\in e\}$ and $G_v:=(V,E\setminus E_v)$. 
Then, $v\in V_G$ is an isolated vertex in $G_v$. 
By the definition of the Swendsen-Wang dynamics we get that 
\[
P_{G_v}(\sigma^{v,k},\tau^{v,l}) 
\;=\; P_{G_v}(\sigma,\tau).
\]
If we set $E_0=E\setminus E_v$ we get 
$\abs{E\setminus E_0}=\deg_G(v)$ and by Lemma \ref{lemma:G0}
\[\begin{split}
P_{G}(\sigma^{v,k},\tau^{v,l}) 
\;&\le\; c_2^{\deg_G(v)}\,P_{G_v}(\sigma^{v,k},\tau^{v,l})
\;=\; c_2^{\deg_G(v)}\,P_{G_v}(\sigma,\tau) \\
&\le\;\left(\frac{c_2}{c_1}\right)^{\deg_G(v)}\,P_{G}(\sigma,\tau)\\
\end{split}\]
with $c_1$ and $c_2$ from Lemma \ref{lemma:G0}.\\
\end{proof}

Now we are able to prove the main result.

\begin{proof}[Proof of Theorem \ref{theorem:main}]
Because of Lemma \ref{lemma:Q} we only have to prove 
$\lambda(P)\ge c_{\rm SW}\,\lambda(Q)$.
Let
\[
c \;:=\; \max_{\substack{\sigma_1,\sigma_2,\tau_1,\tau_2\in\O_{\rm P}\\ 
\sigma_1\sim \sigma_2,\,\tau_1\sim \tau_2}}\;
\frac{P(\sigma_1,\tau_1)}{P(\sigma_2,\tau_2)},
\]
where $\sigma\sim\tau:\Leftrightarrow 
\sum_{v\in V}\abs{\sigma(v)-\tau(v)}\le1$. 
Note that $P_{\rm HB}(\sigma,\tau)\neq0$ if and only if 
$\sigma\sim\tau$.
We get for $\sigma_1,\tau_1\in\O_{\rm P}$ that
\[\begin{split}
Q(\sigma_1,\tau_1) 
\;&=\; \sum_{\sigma_2,\tau_2\in\O_{\rm P}}\,
P_{\rm HB}(\sigma_1,\sigma_2)\,P(\sigma_2,\tau_2)\,
	P_{\rm HB}(\tau_2,\tau_1) \\
&\le\; c\,P(\sigma_1,\tau_1) \;\sum_{\sigma_2\sim\sigma_1} 
			P_{\rm HB}(\sigma_1,\sigma_2)\,
	\sum_{\tau_2\sim\tau_1} 
			P_{\rm HB}(\tau_2,\tau_1) \\
&\le\; q\,c\,P(\sigma_1,\tau_1).
\end{split}\]
It is well-known that
\[
\lambda(P^2) \;=\; 
\min\left\{\frac{\EE_{P^2}(f)}{\Var(f)}:\,\Var(f)\neq0\right\},
\]
where $\Var(f)=\Vert f - S_{\pi_\beta} f\Vert_{\pi_\beta}^2$ and 
\[
\EE_P(f)=\frac12\sum_{\sigma,\tau}(f(\sigma)-f(\tau))^2\,
	\pi_\beta(\sigma)\,P(\sigma,\tau).
\]
We use $P^2$ instead of $P$, because this representation holds 
only if $\lambda(P)=1-\xi_1(P)$ \cite{DSC}. 
From the calculation above we 
have $\EE_{P^2}(f)\ge\frac{1}{q^2 c^2}\,\EE_{Q^2}(f)$ for every 
$f\in L_2(\pi_\beta)$ and so 
$\lambda(P^2)\ge\frac{1}{q^2 c^2}\lambda(Q^2)$. Since we have for 
reversible $P$ that $\lambda(P)\le\lambda(P^2)\le2\lambda(P)$, 
we get
\[
\lambda(P) \;\ge\; \frac{1}{2 q^2 c^2}\,\lambda(Q).
\]
It remains to bound $c$. With $c_3$ from Lemma 
\ref{lemma:P} we get for 
$\sigma_1,\sigma_2,\tau_1, \tau_2\in\O_{\rm P}$ with 
$\sigma_1\sim \sigma_2$ and $\tau_1\sim \tau_2$
\[
\frac{P(\sigma_1,\tau_1)}{P(\sigma_2,\tau_2)}
\;\le\; c_3^{\Delta}\,\frac{P(\sigma_2,\tau_1)}{P(\sigma_2,\tau_2)}
\;\le\; c_3^{2 \Delta}\,\frac{P(\sigma_2,\tau_2)}{P(\sigma_2,\tau_2)} 
\;=\; c_3^{2 \Delta}.
\]
Finally,
\[
\lambda(P) \;\ge\; \frac{1}{2 q^2 c^2}\,\lambda(Q)
\;\ge\; \frac{1}{2 q^2}\,c_3^{-4 \Delta}\,\lambda(Q)
\;\ge\; \frac{1}{2 q^2}\,(q\,e^{2\beta})^{-4 \Delta}\,\lambda(Q).
\]
This completes the proof.\\
\end{proof}

\section{The Modified Swendsen-Wang} \label{sec:ModSW}

With the techniques from the last section we do not get any positive 
result for low temperatures.
However, the Swendsen-Wang dynamics seems to be rapidly mixing also 
at low enough temperatures. Unfortunately, we were not able to prove it.

In this section we introduce a modified version of the 
Swendsen-Wang dynamics for planar graphs that is rapidly mixing 
for the two-dimensional Ising model at all temperatures. 
Roughly speaking the chain makes one step at high and 
one step at low temperatures.

\subsection{Dual graphs}

A graph $G$ is called \emph{planar} if one can embed it into $\R^2$ 
such that two edges of $G$ intersect only at a common endvertex. 
We fix such an embedding for $G$.
Then we define the \emph{dual graph} $G_D=(V_D,E_D)$ of $G$ as follows. 
Place a dual vertex in each face (including the infinite outer one) of 
the graph $G$ and connect 2 vertices by the dual edge $e_D$ if and 
only if the corresponding faces of $G$ share the boundary edge $e$ 
(see e.g. \cite[Section 8.5]{G2}). 
It is clear, that the number of 
vertices can differ in the dual graph, but we have the same number of 
edges.\\
Additionally we define a \emph{dual RC configuration} 
$A_D\subseteq E_D$ in $G_D$ to a RC state $A\subseteq E$ in 
$G$ by
\[
e\in A \;\Longleftrightarrow\; e_D\notin A_D,
\]
where $e_D$ is the edge in $E_D$ that intersects $e$ in our (fixed) 
embedding. (By construction, this edge is unique.) 
See Figure \ref{fig-dual} for the graph $G_L$ with $L=3$ and its 
dual graph $(G_L)_D$ together with 2 corresponding RC states.

\begin{figure}[ht]
\hspace{5mm}\scalebox{1}{\psset{xunit=1.0cm,yunit=1.0cm,algebraic=true,dotstyle=*,dotsize=3pt 0,linewidth=0.8pt,arrowsize=3pt 2,arrowinset=0.25}
\begin{pspicture*}(-0.43,-0.06)(6.02,4.43)
\psline[linewidth=1.6pt](1,3)(2,3)
\psline[linewidth=1.6pt](2,3)(3,3)
\psline[linewidth=1.6pt](3,3)(3,2)
\psline[linewidth=1.6pt](3,2)(3,1)
\psline[linewidth=1.6pt](3,1)(2,1)
\psline[linewidth=1.6pt](2,1)(2,2)
\psline[linewidth=1.6pt](2,2)(3,2)
\psline[linewidth=1.6pt](2,2)(2,3)
\psline[linewidth=1.6pt](1,3)(1,2)
\psline[linewidth=1.6pt](1,2)(2,2)
\psline[linewidth=1.6pt](2,1)(1,1)
\psline[linewidth=1.6pt](1,1)(1,2)
\psline[linewidth=1.0pt,linestyle=dashed,dash=3pt 2pt](1.5,1.5)(1.5,2.5)
\psline[linewidth=1.0pt,linestyle=dashed,dash=3pt 2pt](1.5,2.5)(2.5,2.5)
\psline[linewidth=1.0pt,linestyle=dashed,dash=3pt 2pt](2.5,2.5)(2.5,1.5)
\psline[linewidth=1.0pt,linestyle=dashed,dash=3pt 2pt](2.5,1.5)(1.5,1.5)
\parametricplot[linewidth=1.0pt,linestyle=dashed,dash=3pt 2pt]{0.1852913062710569}{2.989280090214289}{1*1.34*cos(t)+0*1.34*sin(t)+2.83|0*1.34*cos(t)+1*1.34*sin(t)+2.3}
\parametricplot[linewidth=1.0pt,linestyle=dashed,dash=3pt 2pt]{1.1350083349985625}{2.0596018113318753}{1*1.84*cos(t)+0*1.84*sin(t)+3.37|0*1.84*cos(t)+1*1.84*sin(t)+0.87}
\parametricplot[linewidth=1.0pt,linestyle=dashed,dash=3pt 2pt]{4.6717617082515}{5.883637174045906}{1*1.71*cos(t)+0*1.71*sin(t)+2.57|0*1.71*cos(t)+1*1.71*sin(t)+3.21}
\parametricplot[linewidth=1.0pt,linestyle=dashed,dash=3pt 2pt]{1.5028552028831879}{2.100187227182034}{1*0.86*cos(t)+0*0.86*sin(t)+1.44|0*0.86*cos(t)+1*0.86*sin(t)+0.64}
\parametricplot[linewidth=1.0pt,linestyle=dashed,dash=3pt 2pt]{2.311045279311494}{4.0371577644321395}{1*0.54*cos(t)+0*0.54*sin(t)+1.37|0*0.54*cos(t)+1*0.54*sin(t)+0.99}
\parametricplot[linewidth=1.0pt,linestyle=dashed,dash=3pt 2pt]{-2.1993684311435224}{0.19067988888988038}{1*1.98*cos(t)+0*1.98*sin(t)+2.2|0*1.98*cos(t)+1*1.98*sin(t)+2.17}
\parametricplot[linewidth=1.0pt,linestyle=dashed,dash=3pt 2pt]{2.8453007051607155}{5.540388001593068}{1*0.52*cos(t)+0*0.52*sin(t)+3|0*0.52*cos(t)+1*0.52*sin(t)+1.35}
\parametricplot[linewidth=1.0pt,linestyle=dashed,dash=3pt 2pt]{5.428195851138397}{6.21778798060977}{1*2.24*cos(t)+0*2.24*sin(t)+1.91|0*2.24*cos(t)+1*2.24*sin(t)+2.69}
\parametricplot[linewidth=1.0pt,linestyle=dashed,dash=3pt 2pt]{1.651943081227404}{3.288962910742752}{1*0.59*cos(t)+0*0.59*sin(t)+3.08|0*0.59*cos(t)+1*0.59*sin(t)+2.59}
\parametricplot[linewidth=1.0pt,linestyle=dashed,dash=3pt 2pt]{0.523108668073518}{1.593136560077821}{1*1.25*cos(t)+0*1.25*sin(t)+3.06|0*1.25*cos(t)+1*1.25*sin(t)+1.92}
\parametricplot[linewidth=1.0pt,linestyle=dashed,dash=3pt 2pt]{2.956350314060717}{4.7749714687606275}{1*0.85*cos(t)+0*0.85*sin(t)+2.34|0*0.85*cos(t)+1*0.85*sin(t)+1.34}
\parametricplot[linewidth=1.0pt,linestyle=dashed,dash=3pt 2pt]{-1.5125031955649737}{0.0989999773941644}{1*1.87*cos(t)+0*1.87*sin(t)+2.28|0*1.87*cos(t)+1*1.87*sin(t)+2.36}
\parametricplot[linewidth=1.0pt,linestyle=dashed,dash=3pt 2pt]{2.677755480453707}{4.9277297505381314}{1*0.54*cos(t)+0*0.54*sin(t)+1.39|0*0.54*cos(t)+1*0.54*sin(t)+3.02}
\parametricplot[linewidth=1.0pt,linestyle=dashed,dash=3pt 2pt]{1.8843071408090328}{2.583159724740975}{1*1.8*cos(t)+0*1.8*sin(t)+2.44|0*1.8*cos(t)+1*1.8*sin(t)+2.31}
\parametricplot[linewidth=1.0pt,linestyle=dashed,dash=3pt 2pt]{0.17396397227159288}{1.8107485743137344}{1*1.85*cos(t)+0*1.85*sin(t)+2.32|0*1.85*cos(t)+1*1.85*sin(t)+2.22}
\psdots[dotsize=6pt 0](1,3)
\psdots[dotsize=6pt 0](1,2)
\psdots[dotsize=6pt 0](1,1)
\psdots[dotsize=6pt 0](2,3)
\psdots[dotsize=6pt 0](2,2)
\psdots[dotsize=6pt 0](2,1)
\psdots[dotsize=6pt 0](3,3)
\psdots[dotsize=6pt 0](3,2)
\psdots[dotsize=6pt 0](3,1)
\psdots[dotsize=8pt 0,dotstyle=x](1.5,1.5)
\psdots[dotsize=8pt 0,dotstyle=x](2.5,1.5)
\psdots[dotsize=8pt 0,dotstyle=x](1.5,2.5)
\psdots[dotsize=8pt 0,dotstyle=x](2.5,2.5)
\psdots[dotsize=8pt 0,dotstyle=x](4.15,2.54)
\end{pspicture*}
\hspace*{-2cm}\raisebox{-6mm}{\psset{xunit=1.0cm,yunit=1.0cm,algebraic=true,dotstyle=*,dotsize=3pt 0,linewidth=0.8pt,arrowsize=3pt 2,arrowinset=0.25}
\begin{pspicture*}(-0.36,-0.66)(6.08,3.82)
\psline[linewidth=1.6pt](2,3)(3,3)
\psline[linewidth=1.6pt](2,2)(2,3)
\psline[linewidth=1.6pt](1,3)(1,2)
\psline[linewidth=1.6pt](1,2)(2,2)
\psline[linewidth=1.6pt](2,1)(1,1)
\psline[linewidth=1.0pt,linestyle=dashed,dash=3pt 2pt](2.5,2.5)(2.5,1.5)
\psline[linewidth=1.0pt,linestyle=dashed,dash=3pt 2pt](2.5,1.5)(1.5,1.5)
\parametricplot[linewidth=1.0pt,linestyle=dashed,dash=3pt 2pt]{0.1852913062710569}{2.989280090214289}{1*1.34*cos(t)+0*1.34*sin(t)+2.83|0*1.34*cos(t)+1*1.34*sin(t)+2.3}
\parametricplot[linewidth=1.0pt,linestyle=dashed,dash=3pt 2pt]{1.1350083349985625}{2.0596018113318753}{1*1.84*cos(t)+0*1.84*sin(t)+3.37|0*1.84*cos(t)+1*1.84*sin(t)+0.87}
\parametricplot[linewidth=1.0pt,linestyle=dashed,dash=3pt 2pt]{4.6717617082515}{5.883637174045906}{1*1.71*cos(t)+0*1.71*sin(t)+2.57|0*1.71*cos(t)+1*1.71*sin(t)+3.21}
\parametricplot[linewidth=1.0pt,linestyle=dashed,dash=3pt 2pt]{1.5028552028831879}{2.100187227182034}{1*0.86*cos(t)+0*0.86*sin(t)+1.44|0*0.86*cos(t)+1*0.86*sin(t)+0.64}
\parametricplot[linewidth=1.0pt,linestyle=dashed,dash=3pt 2pt]{2.311045279311494}{4.0371577644321395}{1*0.54*cos(t)+0*0.54*sin(t)+1.37|0*0.54*cos(t)+1*0.54*sin(t)+0.99}
\parametricplot[linewidth=1.0pt,linestyle=dashed,dash=3pt 2pt]{-2.1993684311435224}{0.19067988888988038}{1*1.98*cos(t)+0*1.98*sin(t)+2.2|0*1.98*cos(t)+1*1.98*sin(t)+2.17}
\parametricplot[linewidth=1.0pt,linestyle=dashed,dash=3pt 2pt]{2.8453007051607155}{5.540388001593068}{1*0.52*cos(t)+0*0.52*sin(t)+3|0*0.52*cos(t)+1*0.52*sin(t)+1.35}
\parametricplot[linewidth=1.0pt,linestyle=dashed,dash=3pt 2pt]{5.428195851138397}{6.21778798060977}{1*2.24*cos(t)+0*2.24*sin(t)+1.91|0*2.24*cos(t)+1*2.24*sin(t)+2.69}
\psdots[dotsize=6pt 0](1,3)
\psdots[dotsize=6pt 0](1,2)
\psdots[dotsize=6pt 0](1,1)
\psdots[dotsize=6pt 0](2,3)
\psdots[dotsize=6pt 0](2,2)
\psdots[dotsize=6pt 0](2,1)
\psdots[dotsize=6pt 0](3,3)
\psdots[dotsize=6pt 0](3,2)
\psdots[dotsize=6pt 0](3,1)
\psdots[dotsize=8pt 0,dotstyle=x](1.5,1.5)
\psdots[dotsize=8pt 0,dotstyle=x](2.5,1.5)
\psdots[dotsize=8pt 0,dotstyle=x](1.5,2.5)
\psdots[dotsize=8pt 0,dotstyle=x](2.5,2.5)
\psdots[dotsize=8pt 0,dotstyle=x](4.15,2.54)
\end{pspicture*}}}
\vspace*{-5mm}
\caption[Dual graph and dual RC state]{Left: The graph $G_3$ (solid) 
and its dual (dashed).\\ Right: A RC state on $G_3$ (solid) and its 
dual configuration (dashed).}
\label{fig-dual}
\vspace{5mm}
\end{figure}
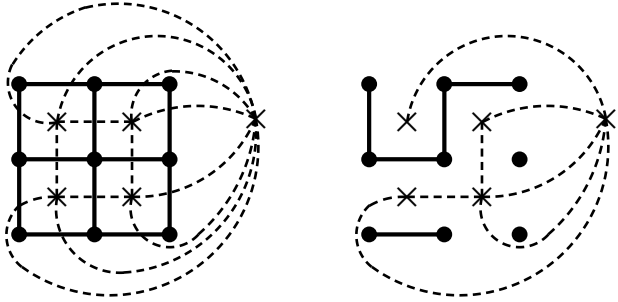

It is easy to obtain (see \cite[p.~134]{G1}) that the random 
cluster models on the (finite) 
graphs $G$ and $G_D$ are related by the equality
\begin{equation} \label{eq:dual}
\mu_{p,q}^G(A) \;=\; \mu^{G_D}_{p^*,q}(A_D),
\end{equation}
where the dual parameter $p^*$ satisfies
\begin{equation} \label{eq:dual-p}
\frac{p^*}{1-p^*} \;=\; \frac{q \,(1-p)}{p}.
\end{equation}
The self-dual point of this relation is given by 
$p_{\rm sd}(q)=\frac{\sqrt{q}}{1+\sqrt{q}}$, which 
corresponds by $p=1-e^{-\beta}$ to the critical temperature 
of the $q$-state Potts model $\beta_c(q)=\log(1+\sqrt{q})$ 
on $\Z^2$ \cite{BDC}. 
In the following we denote by $\beta^*$ the ``dual'' value 
of $\beta$, i.e. we have $p^*=1-e^{-\beta^*}$, $p=1-e^{-\beta}$ 
and \eqref{eq:dual-p}. If $G$ is fixed, we write 
$\mu^*$ instead of $\mu^{G_D}$.

\subsection{The algorithm}

To describe the transition matrix of the modified Swendsen-Wang 
algorithm, we define the (square) 
$2^{\abs{E}}\times2^{\abs{E_D}}$-matrix
\[
D(A,B) \;:=\; \ind(B=A_D), 
\quad A\subset E, B\subset E_D.
\]
By \eqref{eq:dual} we get that 
$\mu_p\cdot D = \mu^*_{p^*}$, 
respectively $\mu^*_{p^*}\cdot D^* = \mu_{p}$ 
(distributions are seen as row vectors), 
where $D^*(B,A)=D(A,B)$, $A\subset E$, $B\subset E_D$.

Now we can state the algorithm.
The transition matrix of the \emph{modified Swendsen-Wang dynamics} 
(for $\O_{\rm P}$) on the planar graph $G$ is defined by
\begin{equation}
M \;:=\; M^G_{p,q} \;=\;
T_{G,p,q}\,D\,T^*_{G_D,p^*,q}\,T_{G_D,p^*,q}\,D^*\,T^*_{G,p,q}, 
\label{eq:M}
\end{equation}
where $T_{G,p,q}$, $T^*_{G,p,q}$ and $p^*$ are defined in 
\eqref{eq:T}, \eqref{eq:T*} and \eqref{eq:dual-p}. 
This dynamics is reversible with respect to $\pi_{\beta,q}^G$.
Although this seems to be complicated, it has an easy interpretation. 
The dynamics defined by the transition matrix $M$ performs the 
following steps:
\begin{enumerate}
	\item[1)] Given a Potts configuration $\sigma_t$ on $G$, 
		generate a random 
		cluster state $A\subset E_G$ with respect to 
		$T_{G,p,q}(\sigma_t,\cdot)$.
	\item[2)] Make one step of the Swendsen-Wang dynamics 
		$\widetilde{P}_{p^*,q}^{G_D}$ starting at $A_D\subset E_{G_D}$ to 
		get a random cluster state $B_D\subset E_{G_D}$.
	\item[3)] Generate $\sigma_{t+1}$ with respect to 
		$T^*_{G,p,q}(B,\cdot)$.
\end{enumerate}

\begin{remark}
A similar, and even simpler, algorithm was considered by 
Edwards and Sokal \cite{Scom} around 1988 for the 
two-dimensional torus, which is ``self-dual'' in some sense. 
In this case they did not have the exact (planar) duality as 
given in \eqref{eq:dual}, but one can use a Metropolis 
acceptance/rejection rule to go to a ``dual configuration''. 
This algorithm could be advantageous near the critical 
temperature $\beta_c(q)$, but away from it the Metropolis 
step is rejected with probability close to 1.
\end{remark}

The next result shows that the modified Swendsen-Wang has a larger 
spectral gap than the original Swendsen-Wang on $G$ and $G_D$, 
respectively. 

\begin{proposition} \label{theorem:mod}
Let $P_{\beta,q}^G$ be the Swendsen-Wang dynamics on a planar 
graph $G$, which is 
reversible with respect to $\pi_{\beta,q}^G$, and $M$ 
as in \eqref{eq:M}. Then
\[
\lambda(M) \;\ge\; \max\Bigl\{\lambda(P_{\beta,q}^G),\,
										\lambda(P_{\beta^*,q}^{G_D})\Bigr\}.
\]
\end{proposition}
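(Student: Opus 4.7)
The plan is to exploit the factorization
$M = T\,D\,\widetilde{P}_D\,D^*\,T^*$,
where $T := T_{G,p,q}$, $T_D := T_{G_D,p^*,q}$, and
$\widetilde{P}_D := T_D^*\,T_D$ is the Swendsen-Wang chain on
$\O_{\rm RC}^{G_D}$ at parameter $p^*$ (which shares its spectral gap
with $P_{\beta^*,q}^{G_D}$ by the remark following \eqref{eq:P2}).
Combining the already-noted adjointness of $T$ and $T^*$ with a natural unitarity of $D$ induced by planar duality, I will bound
$\|M - S_{\pi_\beta}\|_{\pi_\beta}$ above by
$(1-\lambda(P_{\beta,q}^G))(1-\lambda(P_{\beta^*,q}^{G_D}))$
and then invoke the elementary inequality
$1-(1-a)(1-b)\ge\max\{a,b\}$ for $a,b\in[0,1]$.

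First I would establish that $D$ is a unitary operator from
$L_2(\mu^*_{p^*,q})$ onto $L_2(\mu^G_{p,q})$. Since $A\mapsto A_D$
is a bijection and \eqref{eq:dual} gives
$\mu^G_{p,q}(A)=\mu^*_{p^*,q}(A_D)$, the operator $(Df)(A)=f(A_D)$
satisfies $\|Df\|_{\mu_p}=\|f\|_{\mu^*_{p^*}}$ and is surjective,
so $\|D\|=\|D^*\|=1$ with $D^*=D^{-1}$. Moreover, each of
$T,T^*,T_D,T_D^*,D,D^*$ fixes the constant function $\ind$, so each
preserves the subspace of mean-zero functions on its domain.

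Then, for any $f\in L_2(\pi_\beta)$ with
$\l f,\ind\r_{\pi_\beta}=0$, I would chain norm bounds as $f$ is
pushed through $M$:
(i) $\|T^*f\|_{\mu_p}^2 = \l T T^* f,f\r_{\pi_\beta} =
\l P_{\beta,q}^G f,f\r_{\pi_\beta} \le
(1-\lambda(P_{\beta,q}^G))\|f\|_{\pi_\beta}^2$, using that
$P_{\beta,q}^G = T T^*$ has non-negative spectrum;
(ii) $D^*$ maps isometrically onto the mean-zero subspace of
$L_2(\mu^*_{p^*})$;
(iii) $\|\widetilde{P}_D\,h\|_{\mu^*_{p^*}} \le
(1-\lambda(P_{\beta^*,q}^{G_D}))\|h\|_{\mu^*_{p^*}}$ for mean-zero
$h$;
(iv) $D$ is again isometric;
(v) $\|Tg\|_{\pi_\beta}\le\sqrt{1-\lambda(P_{\beta,q}^G)}\,\|g\|_{\mu_p}$
for mean-zero $g$, by the same adjoint identity as in (i).
Multiplying these inequalities yields
$\|Mf\|_{\pi_\beta}\le(1-\lambda(P_{\beta,q}^G))
(1-\lambda(P_{\beta^*,q}^{G_D}))\|f\|_{\pi_\beta}$, and since
$S_{\pi_\beta}f=0$ the same bound holds for
$\|(M-S_{\pi_\beta})f\|_{\pi_\beta}$.

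The only delicate point will be the bookkeeping: at every stage
I must verify that the intermediate function still lies in the
mean-zero subspace of the correct weighted $L_2$ space, which
reduces to the identity
$\l Tg,\ind\r_{\pi_\beta}=\l g,T^*\ind\r_{\mu_p}=\l g,\ind\r_{\mu_p}$
and its analogues for the other stochastic operators. Once this
typing is in place, the estimate is essentially automatic and uses
no probabilistic input beyond the planar duality \eqref{eq:dual}.
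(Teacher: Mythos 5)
Your proposal is correct, and it is built on the same skeleton as the paper's proof: the factorization $M = T\,D\,\widetilde{P}^{G_D}_{p^*,q}\,D^*\,T^*$ with $T=T_{G,p,q}$, the observation that the planar duality \eqref{eq:dual} makes $D$ a unitary from $L_2(\mu^{G_D}_{p^*,q})$ onto $L_2(\mu^{G}_{p,q})$, and the identification of the spectral gaps of $P=TT^*$ and $\widetilde{P}=T^*T$. Where you genuinely diverge is in how the norm of $M-S_{\pi}$ is distributed over the factors. Since $T$ and $T^*$ map between different spaces, one cannot simply multiply operator norms and keep track of the gap; the paper handles this with a spectral-radius argument, bounding $\norm{M-S_\pi}_\pi^{n+1}$ by $\norm{M-S_\pi}_\pi^{n-1}$ via a cyclic rearrangement of the product, taking $n$-th roots and letting $n\to\infty$ to reduce to $\norm{B\,T^*\,T-S_\mu}_\mu$ before splitting off the two Swendsen--Wang chains. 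You instead exploit positive semi-definiteness directly: for mean-zero $f$ one has $\norm{T^*f}_{\mu}^2=\l Pf,f\r_{\pi}\le(1-\lambda(P))\norm{f}_\pi^2$, and symmetrically $\norm{Tg}_\pi^2=\l\widetilde{P}g,g\r_\mu\le(1-\lambda(P))\norm{g}_\mu^2$, which places the factor $1-\lambda(P^G_{\beta,q})$ as two square roots at the two ends of the chain and avoids the limiting argument altogether. Both routes arrive at the same product bound $\norm{M-S_\pi}_\pi\le(1-\lambda(P_{\beta,q}^G))(1-\lambda(P_{\beta^*,q}^{G_D}))$, which is slightly stronger than the stated maximum via $1-(1-a)(1-b)\ge\max\{a,b\}$; your execution is the more elementary of the two, and the typing bookkeeping you flag (stochastic operators and their adjoints preserve the mean-zero subspaces) is exactly the right thing to verify and does go through.
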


\begin{proof}
Let $\pi:=\pi_{\beta,q}^G$ and $\pi^*:=\pi_{\beta^*,q}^{G_D}$ as 
well as $\mu:=\mu_{p,q}^G$ and $\mu^*:=\mu_{p^*,q}^{G_D}$. 
Additionally, let $T_1:=T_{G,p,q}$, $T_2:=T^*_{G,p,q}$ and 
$B:=D\,T^*_{G_D,p^*,q}\,T_{G_D,p^*,q}\,D^*$. 
Then, since $M$ is reversible, for all $n\in\N$ 
(recall that $S_\pi(\sigma,\tau)=\pi(\tau)$)
\[\begin{split}
\norm{M-S_\pi}_\pi^{n+1} 
\;&=\; \norm{ T_1\,B\,T_2-S_\pi}_\pi^{n+1}
\;=\; \norm{(T_1\,B\,T_2 - S_\pi)^{n+1}}_\pi \\
\;&=\; \norm{(T_1\,B\,T_2)^{n+1} - S_\pi}_\pi 
\;=\; \norm{T_1\,(B\,T_2\,T_1)^n\,B\,T_2 - S_\pi}_\pi \\
\;&\le\; \norm{T_1}_{(\mu,\pi)}\,\norm{B}_\mu\,\norm{T_2}_{(\pi,\mu)}\,
				\norm{(B\,T_2\,T_1)^n - S_\mu}_\mu \\
\;&\le\; \norm{T_1}_{(\mu,\pi)}^2\,\norm{B}_\mu^2\,
			\norm{T_2}_{(\pi,\mu)}^2\,\norm{(T_1\,B\,T_2)^{n-1}-S_\pi}_\pi \\
\;&\le\; \norm{T_1\,B\,T_2 - S_\pi}_\pi^{n-1} 
\;=\; \norm{M-S_\pi}_\pi^{n-1} ,  
\end{split}\]
where $\norm{T_1}_{(\mu,\pi)}:=\norm{T_1}_{L_2(\mu)\to L_2(\pi)}$. 
The last inequality holds because the operator norms are all less 
than one (since $\pi T_1=\mu$, $\mu B=\mu$ and $\mu T_2=\pi$).
Taking the $n$-th root and let $n$ tends to infinity we obtain
\[\begin{split}
\norm{M - S_\pi}_\pi 
\;&=\; \lim_{n\to\infty} \norm{(B\,T_2\,T_1 - S_\mu)^n}_\mu^{1/n} 
\;\le\; \norm{B\,T_2\,T_1 - S_\mu}_\mu \\
\;&\le\; \norm{B-S_\mu}_\mu \; \norm{T_2\,T_1- S_\mu}_\mu 
\;=\; \norm{B-S_\mu}_\mu \; \norm{\widetilde{P}^G_{p,q}- S_\mu}_\mu 
\end{split}\]
with $\widetilde{P}^G_{p,q}$ from \eqref{eq:P2}. 
Since $S_{\mu}=D\,S_{\mu^*}\,D^*$ we get
\[\begin{split}
\norm{ M-S_\pi}_\pi 
\;&\le\; \norm{D(\widetilde{P}_{p^*,q}^{G_D}-S_{\mu^*})D^*}_{\mu}
\norm{\widetilde{P}^G_{p,q}- S_\mu}_\mu 
\le \norm{P_{\beta^*,q}^{G_D}-S_{\pi^*}}_{\pi^*}
\norm{P_{\beta,q}^{G}-S_{\pi}}_{\pi} \\
&\le\; \min\Bigl\{\norm{ P_{\beta^*,q}^{G_D}-S_{\pi^*}}_{\pi^*},\,
\norm{P_{\beta,q}^{G}-S_{\pi}}_{\pi}\Bigr\}.
\end{split}\]
This yields the result, because $\lambda(M)=1-\norm{ M-S_\pi}_\pi$.
\end{proof}

\subsection{Examples}

Now we present lower bounds on the spectral gap of 
the modified Swendsen-Wang dynamics for two examples. 
The first one is the Potts model on trees, which is 
mainly a toy example, because one knows that the 
original Swendsen-Wang is rapidly mixing at all 
temperatures \cite{CF}. We present this example to 
show that the modified Swendsen-Wang can improve the 
original one at all temperatures.

The second example is the Potts model on the square lattice, 
where we prove rapid mixing for all non-critical temperatures.

\subsubsection{Potts model on trees}

Let the graph $G$ be a tree. 
By definition, $G$ has only one face: 
the outer one. Hence, the dual graph $G_D$ has only one vertex with 
$\abs{E_G}$ loops. Therefore the Swendsen-Wang dynamics 
$\widetilde{P}$ on $G_D$ is simply independent bond percolation.
It follows $\lambda(\widetilde{P})=1$ and by 
Proposition~\ref{theorem:mod} we get

\begin{corollary}
Let $G$ be a tree and $M$ be the transition matrix of the modified 
Swendsen-Wang dynamics as in \eqref{eq:M}. Then we get for all 
$\beta\ge0$ and $q\in\N$ that
\[
\lambda(M) \;=\; 1.
\]
\end{corollary}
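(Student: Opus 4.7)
The plan is to apply Proposition~\ref{theorem:mod} and reduce matters to showing that the Swendsen-Wang dynamics on the dual graph is already perfectly mixing, which turns out to be immediate once the structure of the dual of a tree is unpacked.

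First I would describe the dual graph. Since $G$ is a tree, it is planar and has exactly one face, namely the outer unbounded face. Therefore $G_D$ consists of a single vertex $v_D$ with $\abs{E_G}$ loops attached to it (one loop per edge of $G$). In particular $V_{G_D}=\{v_D\}$, so the Potts state space on $G_D$ is $\O_{\rm P}^{G_D}=[q]$ and $\O_{\rm RC}^{G_D}=\{A:A\subseteq E_{G_D}\}$.

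Next I would analyze $\widetilde{P}=\widetilde{P}^{G_D}_{p^*,q}=T^*_{G_D,p^*,q}\,T_{G_D,p^*,q}$ directly. Because every edge of $G_D$ is a loop at $v_D$, the graph $(V_{G_D},A)$ has the single component $\{v_D\}$ for every $A\subseteq E_{G_D}$, so $C(A)=1$ always. By \eqref{eq:T*}, $T^*_{G_D,p^*,q}$ then assigns the color of $v_D$ uniformly on $[q]$ independently of $A$. Conversely, for any single-vertex configuration $\sigma\in[q]$ we have $E(\sigma)=E_{G_D}$, so by \eqref{eq:T} the matrix $T_{G_D,p^*,q}(\sigma,\cdot)$ picks each loop independently with probability $p^*$. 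Composing the two steps, $\widetilde{P}(A,B)$ is independent of $A$ and equals $\mu^*_{p^*,q}(B)$; that is, $\widetilde{P}=S_{\mu^*}$. Consequently $\norm{\widetilde{P}-S_{\mu^*}}_{\mu^*}=0$, so $\lambda(\widetilde{P})=1$, and by the remark at the end of Section~\ref{sec:prelim} that $P$ and $\widetilde P$ share the same spectral gap, we also get $\lambda(P^{G_D}_{\beta^*,q})=1$.

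Finally, I would plug this into Proposition~\ref{theorem:mod}, which gives
\[
\lambda(M) \;\ge\; \max\bigl\{\lambda(P^{G}_{\beta,q}),\,\lambda(P^{G_D}_{\beta^*,q})\bigr\} \;\ge\; 1,
\]
and since any transition matrix satisfies $\lambda\le 1$, equality follows. There is essentially no obstacle here; the only subtlety is the combinatorial observation that loops contribute no extra components to $(V_{G_D},A)$, so the cluster-count $C(A)$ collapses to $1$ and the SW dynamics on the dual degenerates to i.i.d. bond percolation in a single step.
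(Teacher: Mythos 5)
Your proposal is correct and follows essentially the same route as the paper: identify the dual of a tree as a single vertex with $\abs{E_G}$ loops, observe that the Swendsen--Wang dynamics there degenerates to independent bond percolation with spectral gap $1$, and conclude via Proposition~\ref{theorem:mod}. You merely spell out in more detail why $C(A)\equiv 1$ forces $\widetilde{P}=S_{\mu^*}$ and why this transfers to $P^{G_D}_{\beta^*,q}$, which the paper leaves implicit.
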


\subsubsection{The Potts model on the square lattice}

Now we consider the Potts model on the two-dimensional square 
lattice of side length $L$. This is the graph $G=G_L=(V,E)$ with 
$V=\{1,\dots,L\}^2$ and $E=\{\{u,v\}\in V^2:\,\abs{u-v}=1\}$.
For the definition of the dual graph we use the following 
notation of the union of graphs.
Let $G=(V,E)$, $W\subset V$ and $v^*\notin V$ be an additional 
auxiliary vertex. Then we denote by $G' =G\cup_W v^*$ the graph 
$G'=(V',E')$ with $V'=V\cup \{v^*\}$ and 
$E'=E\cup\{\{v^*,u\}:\,u\in W\}$. 
Furthermore we denote the ``boundary'' of $G_L$ by 
$\delta_L=\bigl\{(v_1,v_2)\in \Z^2:\,
v_1\in\{1,L\} \text{ or } v_2\in\{1,L\}\bigr\}\subset V_{G_L}$.\\
By the construction of the dual graph it is easy to obtain 
(see Figure~\ref{fig-dual}) that 
the dual graph of the square lattice $G=G_L$ is given by 
\[
G_D \;=\; G_{L-1}\,\cup_{\delta_{L-1}}\,v^*.
\]
We cannot apply Theorem~\ref{theorem:main} directly for this graph, 
because it has no bounded degree, i.e. 
$\deg_{G_D}(v^*)=\abs{\delta_{L-1}}=4(L-1)$. Hence, we have to 
prove the following modification of Theorem \ref{theorem:main}.

\begin{cit}[Theorem~\ref{theorem:main}$'$]\label{theorem:main2}
Let $P$ be the transition matrix of the Swendsen-Wang dynamics, 
which is reversible with respect to $\pi^G_{\beta,q}$. 
Furthermore let $v\in~V_G$ be any vertex, $k\in[q]$,
\[
\Lambda_v^k \;:=\; \{\sigma\in\O_P:\,\sigma(v)=k\}
\]
and
\begin{equation}
P_{\Lambda_v^k}(\sigma,\sigma^{u,l}) \;=\; \frac{1}{N-1}\,
\frac{\pi_\beta(\sigma^{u,l}\;\rule[-1.5mm]{0.2mm}{5mm}\;\Lambda_v^k)}
	{\sum_{j\in[q]}\pi_\beta(\sigma^{u,j}\;\rule[-1.5mm]{0.2mm}{5mm}\;\Lambda_v^k)},
\label{eq:HB2}
\end{equation}
for $\sigma,\tau\in\Lambda_v^k$ with $\abs{\sigma-\tau}=1$.\\
Then
\[
\lambda(P) \;\ge\; \widetilde{c}_{\rm SW}\,\lambda(P_{\Lambda_v^k}^2),
\]
where
\[
\widetilde{c}_{\rm SW} \;=\; \widetilde{c}_{\rm SW}(G,\beta,q) 
\;:=\; \frac1{2 q^2}\left(q\,e^{2\beta}\right)^{-4\widetilde{\Delta}}
\]
with 
\[
\widetilde{\Delta} \;:=\; \max_{u\in V_G\setminus\{v\}}\,\deg_G(u).
\] 
\end{cit}

\begin{proof}
The proof of this theorem is very similar to the proof of 
Theorem \ref{theorem:main}.
First we define the following ``flip'' 
transition matrices. Let $\sigma\in\O_{\rm P}$ and 
$\tau\in\Lambda_v^k$, then
\[
F_1(\sigma,\tau) \;:=\; \ind(\tau\,=\,\sigma-\sigma(v)+k)
\]
and
\[
F_2(\sigma,\tau) \;:=\; F_1^*(\sigma,\tau) \;=\;
\frac{1}{q}\,\sum_{l=0}^{q-1}\,\ind(\tau=\sigma+l),
\]
where $(\sigma+l)(u)\,:=\,(\sigma(u)+l-1\mod q)+1$ for 
$\sigma\in\O_{\rm P}$ and $l\in\Z$. 
Let $\pi:=\pi^G_{\beta,q}$ and $\widetilde{\pi}:=
\pi_{\beta,q}^G(\cdot\,\rule[-1.5mm]{0.2mm}{5mm}\,\Lambda_v^k)$.
It is easy to check that $\pi F_1=\widetilde{\pi}$ and 
$\widetilde{\pi}F_2=\pi$.
Following the same ideas as in Section \ref{sec:main} 
with
\[
\widetilde{Q} \;=\; F_1\,P_{\Lambda_v^k}\,F_2\,P\,
	F_1\,P_{\Lambda_v^k}\,F_2,
\]
which is reversible with respect to $\pi=\pi_{\beta,q}^G$, we get 
\[
\norm{ \widetilde{Q}-S_\pi}_\pi \;\le\; 
\norm{ P_{\Lambda_v^k}-S_{\widetilde{\pi}}}_{\widetilde{\pi}}^2
\;=\; \norm{ P_{\Lambda_v^k}^2-S_{\widetilde{\pi}}}_{\widetilde{\pi}}.
\]
Similarly as Lemma \ref{lemma:Q}, 
$\lambda(\widetilde{Q})\ge\lambda(P_{\Lambda_v^k}^2)$. 
It remains to prove that 
$\lambda(P)\ge\widetilde{c}_{\rm SW}\,\lambda(\widetilde{Q})$. 
For this we define for $\sigma\in\O_{\rm P}$, 
$\sigma^k:=\sigma-\sigma(v)+k\in\Lambda_v^k$ and 
$\sigma\equiv\tau :\Leftrightarrow \{\exists l\in[q]: \tau=\sigma^l\}$.
By the construction of the Swendsen-Wang dynamics we have for 
$k,l\in[q]$ that
\[
P(\sigma^k,\tau^l) \;=\; P(\sigma,\tau) \qquad 
	\forall\sigma,\tau\in\O_{\rm P}.
\]
Hence we get for $\sigma,\tau\in\O_{\rm P}$ with 
\[
\widetilde{c} \;:=\; 
\max_{\substack{\sigma_1,\sigma_2,\tau_1,\tau_2\in\Lambda_v^k\\ 
\sigma_1\sim \sigma_2,\,\tau_1\sim \tau_2}}\;
\frac{P(\sigma_1,\tau_1)}{P(\sigma_2,\tau_2)}
\]
that
\[\begin{split}
\widetilde{Q}(\sigma,\tau) 
\;&=\; \sum_{\substack{\sigma_1,\tau_1\in\Lambda_v^k,\\
								\sigma_2,\tau_2\in\O_{\rm P}}}
	P_{\Lambda_v^k}(\sigma^k,\sigma_1)\,F_2(\sigma_1,\sigma_2)\,
	P(\sigma_2,\tau_2)\,P_{\Lambda_v^k}(\tau_2^k,\tau_1)\,
	F_2(\tau_1,\tau) \\
\;&=\; \sum_{\substack{\sigma_1,\tau_1\in\Lambda_v^k,\\
								\tau_2\in\O_{\rm P}}}
	P_{\Lambda_v^k}(\sigma^k,\sigma_1)\,
	P(\sigma_1,\tau_2^k)\,P_{\Lambda_v^k}(\tau_2^k,\tau_1)\,
	F_2(\tau_1,\tau) \\
&\le\; \widetilde{c}\,P(\sigma^k,\tau) \;
	\sum_{\sigma_1\sim\sigma^k} P_{\Lambda_v^k}(\sigma^k,\sigma_1)\,
	\sum_{\tau_1\equiv\tau} F_2(\tau_1,\tau) \,
	\sum_{\tau_2:\tau_2^k\sim\tau_1} P_{\Lambda_v^k}(\tau_2^k,\tau_1) \\
&=\; \widetilde{c}\,P(\sigma,\tau) \;
	\sum_{\tau_1\equiv\tau} F_2(\tau_1,\tau) \,
	\sum_{\tau_2:\tau_2^k\sim\tau_1} P_{\Lambda_v^k}(\tau_2^k,\tau_1).
\end{split}\]
But $\tau_1$ is unique, because $\tau_1\equiv\tau$ and 
$\tau_1\in\Lambda_v^k$. Therefore,
\[\begin{split}
\widetilde{Q}(\sigma,\tau) 
\;&\le\; \widetilde{c}\,P(\sigma,\tau) \; \frac{1}{q} \,
	\sum_{\tau_2:\tau_2^k\sim\tau^k} P_{\Lambda_v^k}(\tau_2^k,\tau_1) \\
\;&=\; \widetilde{c}\,P(\sigma,\tau) \; \frac{1}{q} \,
	\sum_{l=1}^q\,\sum_{\tau_2\in\Lambda_v^l:\tau_2^k\sim\tau^k} 
	P_{\Lambda_v^k}(\tau_2^k,\tau_1) \\
\;&\le\; q\,\widetilde{c}\,P(\sigma,\tau).
\end{split}\]
With $c_3$ from Lemma~\ref{lemma:P} we get for 
$\sigma_1,\sigma_2,\tau_1,\tau_2\in\Lambda_v^k$ with 
$\sigma_1\sim \sigma_2$ and $\tau_1\sim \tau_2$ 
(since $\sigma_1(v)=\sigma_2(v)$ and $\tau_1(v)=\tau_2(v)$) that
\[
\frac{P(\sigma_1,\tau_1)}{P(\sigma_2,\tau_2)}
\;\le\; c_3^{\widetilde{\Delta}}\,
	\frac{P(\sigma_2,\tau_1)}{P(\sigma_2,\tau_2)}
\;\le\; c_3^{2 \widetilde{\Delta}}\,
	\frac{P(\sigma_2,\tau_2)}{P(\sigma_2,\tau_2)} 
\;=\; c_3^{2 \widetilde{\Delta}}.
\]
By the same ideas as in the proof of Theorem~\ref{theorem:main} 
we conclude
\[
\lambda(P) \;\ge\; \frac{1}{2 q^2 \widetilde{c}^2}\,\lambda(Q)
\;\ge\; \frac{1}{2 q^2}\,c_3^{-4 \widetilde{\Delta}}\,\lambda(Q)
\;\ge\; \frac{1}{2 q^2}\,(q\,e^{2\beta})^{-4 \widetilde{\Delta}}\,
	\lambda(\widetilde{Q}).
\]
This completes the proof.\\
\end{proof}

Now we can apply Theorem~\ref{theorem:main}$'$ for 
$G_D \;=\; G_{L-1}\,\cup_{\delta_{L-1}}\,v^*$ with $v=v^*$, because 
then $\widetilde{\Delta}=4$. For the result on the spectral gap 
of the Markov chain with transition matrix $M$ we need a lower bound 
on the spectral gap of $P_{\Lambda_{v^*}^k}$ on $G_D$.
But this is simply the heat-bath dynamics on $G_{L-1}$ with 
constant (i.e. $k$) boundary condition. From the well-known 
results on the spectral gap of the heat-bath dynamics on the square 
lattice at high temperatures (see \cite{MO1} or \cite{MOS}) we know 
that the $\mathcal{O}(N)$ bound on the inverse-gap also holds for 
arbitrary boundary conditions. Hence, we get the following.

\begin{corollary} \label{coro:mod}
Let $G_L$ be the square lattice of side-length $L$, $N=L^2$
and $M=M^{G_L}_{p,q}$ as in \eqref{eq:M}. Then there exist 
constants $c_\beta>0$ and $m>0$ such that
\begin{itemize}
\item\quad $\lambda(M)^{-1}\;\le\;c_\beta\,N$
	\qquad for $\beta\neq\beta_c(q)=\ln(1+\sqrt{q})$ 
\vspace{2mm}
\item\quad $\lambda(M)^{-1}\;\le\;c\,N^m$
	\qquad for $q=2$ and $\beta=\beta_c(2)$, 
\end{itemize}
where $c=c_{\rm SW}(G_L,\beta_c,2)$.
\end{corollary}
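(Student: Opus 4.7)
The plan is to apply Proposition~\ref{theorem:mod} to reduce the spectral gap of $M$ to that of the ordinary Swendsen-Wang dynamics on either $G_L$ (at inverse temperature $\beta$) or on the dual $G_D$ (at inverse temperature $\beta^*$). By the duality formula \eqref{eq:dual-p} and the self-dual value $p_{\rm sd}(q)=\sqrt{q}/(1+\sqrt{q})$, exactly one of $\beta,\beta^*$ lies strictly below $\beta_c(q)$ whenever $\beta\neq\beta_c(q)$, so every non-critical case can be reduced to a high-temperature bound on one of the two graphs.

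For $\beta<\beta_c(q)$, I would invoke the first bullet of Corollary~\ref{coro:2d} on $G_L$ to obtain $\lambda(P_{\beta,q}^{G_L})^{-1}\le c_\beta N$, and conclude via Proposition~\ref{theorem:mod}. The critical $q=2$ case uses instead the second bullet of Corollary~\ref{coro:2d} for $\lambda(P_{\beta_c,2}^{G_L})^{-1}\le cN^m$, again passed through Proposition~\ref{theorem:mod}; both are essentially immediate once Proposition~\ref{theorem:mod} is available.

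The low-temperature case $\beta>\beta_c(q)$ is the substantive step. Here $\beta^*<\beta_c(q)$, so one would like to bound $\lambda(P_{\beta^*,q}^{G_D})$. The obstacle is that $G_D=G_{L-1}\cup_{\delta_{L-1}}v^*$ contains the single vertex $v^*$ of degree $4(L-1)$, so Theorem~\ref{theorem:main} cannot be applied directly without the prefactor blowing up with $L$. This is precisely the setting for which Theorem~\ref{theorem:main}$'$ was designed: applied with $v=v^*$, it removes $v^*$ from the degree bound, and since every other vertex of $G_D$ has degree at most four, one has $\widetilde{\Delta}=4$, so $\widetilde{c}_{\rm SW}$ becomes a constant depending only on $\beta^*$ and $q$. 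This yields
\[
\lambda(P_{\beta^*,q}^{G_D}) \;\ge\; \widetilde{c}_{\rm SW}\,\lambda(P_{\Lambda_{v^*}^k}^2).
\]

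It remains to show $\lambda(P_{\Lambda_{v^*}^k})^{-1}=\mathcal{O}(N)$. The chain $P_{\Lambda_{v^*}^k}$ is, after freezing $v^*$ and folding its edges into the adjacent boundary, the single-site heat-bath dynamics on $G_{L-1}$ subject to a constant-color (color $k$) boundary condition on $\delta_{L-1}$. Since $\beta^*<\beta_c(q)$ is subcritical, I would invoke the well-known subcritical heat-bath gap estimates on square-lattice boxes that are uniform in the boundary condition (the results of Martinelli--Olivieri and Martinelli--Olivieri--Schonmann cited in the paper) to get $\lambda(P_{\Lambda_{v^*}^k})^{-1}\le c_{\beta^*}N$. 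Chaining this with Theorem~\ref{theorem:main}$'$ and Proposition~\ref{theorem:mod} completes the low-temperature case. I expect the main obstacle to be exactly the anomalous degree of $v^*$, which is what forces the use of Theorem~\ref{theorem:main}$'$; once that is done, the argument reduces to invoking existing subcritical heat-bath and Swendsen-Wang estimates with boundary conditions.
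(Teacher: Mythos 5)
Your proposal is correct and follows essentially the same route as the paper: Proposition~\ref{theorem:mod} reduces the problem to the Swendsen--Wang gap on $G_L$ or on its dual, the subcritical side is handled by Corollary~\ref{coro:2d} (and Lubetzky--Sly for critical $q=2$), and the supercritical side is dualized to $\beta^*<\beta_c$ on $G_D$, where Theorem~\ref{theorem:main}$'$ with $v=v^*$ gives $\widetilde{\Delta}=4$ and the remaining chain $P_{\Lambda_{v^*}^k}$ is the heat-bath dynamics on $G_{L-1}$ with constant boundary condition, whose $\mathcal{O}(N)$ inverse gap is uniform in the boundary condition by \cite{MO1,MOS}. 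This is exactly the argument the paper gives in the paragraph preceding the corollary.
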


\vspace{1cm}
\subsection*{Acknowledgements}
The author thanks Alan Sokal for carefully reading the paper 
and for his valuable comments. I am also very grateful to 
my colleages Erich Novak, Daniel Rudolf and Aicke Hinrichs for 
the numerous discussions.

{
\linespread{1}
\bibliographystyle{amsalpha}
\bibliography{Bibliography}
}

\end{document}